\newtheorem{theorem}{Theorem}[section]
\newtheorem{lemma}[theorem]{Lemma}
\newtheorem{proposition}[theorem]{Proposition}
\newtheorem{corollary}[theorem]{Corollary}
\theoremstyle{definition}
\newtheorem{definition}[theorem]{Definition}
\theoremstyle{remark}
\newtheorem{remark}[theorem]{Remark}
\newtheorem*{theorem*}{{\bf Theorem}}
\newtheorem*{assumption*}{{\bf Assumption}}
\let\phi=\varphi
\def\eps{\varepsilon}
\def\N{\mathbb{N}}
\def\R{\mathbb{R}}
\def\C{\mathbb{C}}
\def\H{\mathbb{H}}
\def\F{\mathbb{F}}
\def\eps{\varepsilon}
\def\0{\mathbf{0}}
\def\P{\mathcal{P}}
\def\ol{\overline}
\DeclareMathOperator{\rank}{rank}
\def\Poset{{\textnormal {\bf Poset}}}
\def\Pozero{{\textnormal {\bf Poset0}}}
\newcommand{\comment}[1]{}
\newcommand{\norm}[1]{\left\Vert #1 \right\Vert}
\numberwithin{equation}{section}
\let\epsilon=\varepsilon
\def\@maketitle{%
  \newpage
  \null
  \vskip 2em%
  \begin{center}%
  \let \footnote \thanks
    {\Large\bfseries \@title \par}%
    \vskip 1.5em%
    {\normalsize
      \lineskip .5em%
      \begin{tabular}[t]{c}%
        \@author
      \end{tabular}\par}%
    \vskip 1em%
    {\normalsize \@date}%
  \end{center}%
  \par
  \vskip 1.5em}
\begin{document}

\title{\sc \huge Order isomorphisms on order intervals of atomic JBW-algebras}

\author{Mark Roelands%
\thanks{Email: \texttt{mark.roelands@gmail.com}}}
\affil{School of Mathematics, Statistics \& Actuarial Science, University of Kent, Canterbury, CT2 7NX,
United Kingdom}

\author{Marten Wortel%
\thanks{Email: \texttt{marten.wortel@up.ac.za}}}
\affil{Department of Mathematics and Applied Mathematics, University of Pretoria, Private Bag X20 Hatfield, 0028 Pretoria, South Africa}
\maketitle
\date{}
%\date{\today}

\begin{abstract}
In this paper a full description of order isomorphisms between effect algebras of atomic JBW-algebras is given. We will derive a closed formula for the order isomorphisms on the effect algebra of type I factors by proving that the invertible part of the effect algebra of a type I factor is left invariant. This yields an order isomorphism on the whole cone, for which a characterisation exists. Furthermore, we will show that the obtained formula for the order isomorphism on the invertible part can be extended to the whole effect algebra again. As atomic JBW-algebras are direct sums of type I factors and order isomorphisms factor through the direct sum decomposition, this yields the desired description.
\end{abstract}

{\small {\bf Keywords:} Order isomorphisms, atomic JBW-algebras, effect algebra.}

{\small {\bf Subject Classification:} Primary 47B49; Secondary 46L70.}

%\tableofcontents
\section{Introduction}

Let $H$ be a complex Hilbert space. In \cite{Semrl}, inspired by Ludwig \cite[Section~V.5]{Ludwig} and Moln\'ar \cite[Corollary~4]{Molnar2003}, \v{S}emrl investigated the order isomorphisms of the effect algebra 
$$
[0, I] := \{T \in B(H)_{sa} \colon 0 \leq T \leq I\},
$$
which is an important object in quantum mechanics. For more details and an extensive historical perspective we direct the reader to the excellent introduction of \v{S}emrl's paper \cite{Semrl}. The previous results of Ludwig and Moln\'ar characterised the order isomorphisms of the effect algebra that satisfy some additional assumptions, but \v{S}emrl was the first to provide a characterisation in full generality.

In his paper, \v{S}emrl proved and made use of the fact that each order isomorphism of the effect algebra leaves the invertible part invariant. Drnov\v{s}ek made clever use of this fact in \cite{D} by noting that the inverse yields an order anti-isomorphism between the invertible part of the effect algebra and the translated positive cone. Since the order isomorphisms of the positive cone are known (\cite[Theorem~1]{Molnar2001}), Drnov\v{s}ek was able to characterise the order isomorphisms of the effect algebra, greatly simplifying the proof of \v{S}emrl's result.

Note that $B(H)$ is an atomic von Neumann algebra, in fact a factor, and its self-adjoint part is therefore an atomic JBW-algebra (the reader more familiar with the theory of $C$*-algebras should think of the self-adjoint part of a von Neumann algebra whenever JBW-algebras are mentioned). The goal of this paper is to extend the above results on $B(H)$ to arbitrary atomic JBW-algebras. 

Our approach is based on Drnov\v{s}ek's idea of using the order anti-isomorphism of the invertible part of the effect algebra with the translated positive cone. For this we need that the invertible part is invariant, and we show that this is true if the atomic JBW-algebra is a factor. General atomic JBW-algebras are direct sums (of arbitrary size) of factors, and if there are infinitely many summands then the invertible part need no longer be invariant, but we also show that order decompositions correspond  to JBW-algebra direct sum decompositions, a result that allows us to treat each factor separately. We then apply a recent result by Roelands and van Imhoff \cite[Theorem~3.8]{IR} characterising order isomorphisms between the positive cones in atomic JBW-algebras to obtain a full description of order isomorphisms between their effect algebras.

Another major contribution in this research area was made by Mori in \cite{Mori}, who mostly considered order isomorphisms between the effect algebra of (not necessarily atomic) von Neumann algebras without type $I_1$ direct summand. He showed that the image of $\frac{1}{2}I$ is always locally measurably invertible, a concept from the theory of non-commutative integration. Furthermore, he characterised the order isomorphisms in the case where the image of $\frac{1}{2}I$ is invertible. 
\\ \\
We will now briefly outline the structure of our paper. Section~\ref{s:prelim} is our preliminary section where we cover the basics of JB(W)-algebras and order products. In Section~\ref{s:order_structure} we investigate the order structure of the effect algebra. The main result of this section is that the JBW-algebra decompositions are in bijection with the order decompositions of the space. This is used in Section~\ref{s:order_isomorphisms}, where we first show that the invertible part of the effect algebra is always invariant if and only if the centre is finite-dimensional, and use this to obtain a full characterisation of order isomorphisms of the effect algebra in atomic JBW-algebras, Theorem~\ref{t:char order isoms finite sums}.

\section{Preliminaries}\label{s:prelim}

\subsection{JB-algebras}

A \emph{Jordan algebra} $(A, \circ)$ is a commutative, not necessarily associative algebra such that
\[
x \circ (y \circ x^2) = (x \circ y) \circ x^2 \mbox{\quad  for all }x,y \in A.
\]
A \emph{JB-algebra} $A$ is a normed, complete real Jordan algebra satisfying,
\begin{align*}
\norm{x \circ y} &\leq \norm{x}\norm{y}, \\
\norm{x^2} &= \norm{x}^2, \\
\norm{x^2} &\leq \norm{x^2 + y^2}
\end{align*}
for all $x,y \in A$. The canonical example of a JB-algebra is the set of self-adjoint elements of a $C^*$-algebra equipped with the Jordan product $x \circ y := \frac{1}{2}(xy + yx)$. By the Gelfand-Naimark theorem, this JB-algebra is a norm closed Jordan subalgebra of the self-adjoint bounded operators on a complex Hilbert space. Such a JB-algebra is called a \emph{JC-algebra}.

The elements $x,y \in A$ are said to \emph{operator commute} if $x \circ (y \circ z) = y \circ (x \circ z)$ for all $z \in A$. In a JC-algebra, two elements operator commute if and only if they commute for the $C^*$-multiplication by \cite[Proposition~1.49]{AS}. An element $x\in A$ is said to be \emph{central} if it operator commutes with all elements of $A$. The \emph{centre} of $A$, denoted by $Z(A)$, consists of all elements that operator commute with all elements of $A$, and it is an associative subalgebra of $A$. Every associative unital JB-algebra is isometrically isomorphic to $C(K)$ for some compact Hausdorff space $K$, see \cite[Theorem~3.2.2]{OS}.

The set of invertible elements of a unital JB-algebra A is denoted by $\mathrm{Inv}(A)$. The \emph{spectrum} of $x\in  A$, which is denoted by $\sigma(x)$, is
defined to be the set of $\lambda\in\mathbb{R}$ such that $\lambda e-x$ is not invertible in JB$(x, e)$, the JB-algebra
generated by $x$ and $e$, see \cite[3.2.3]{OS}. Furthermore, there is a continuous functional calculus, that is, JB$(x, e)$ is isometrically isomorphic to $C(\sigma(x))$ as a JB-algebra. The cone of elements with non-negative spectrum is denoted by $A_+$, and equals the set of squares by the functional calculus, and its interior $A_+^\circ$ consists of all elements with strictly positive spectrum.
The cone $A_+$ induces a partial ordering $\leq$ on $A$ by writing $x\le y$ if $y-x\in A_+$. 

An \emph{order interval} in $A$ is of the form $[x,y]$ for $x,y\in A$ with $x\le y$. For order intervals $[x,y]$ and $[u,v]$ an \emph{order isomorphism} $f\colon [x,y]\to[u,v]$ is an order preserving bijection with order preserving inverse, that is, $x\le y$ if and only if $f(x)\le f(y)$. Since we have $[x,y]=x+[0,y-x]$ and $[x,y]$ is order isomorphic to $[0,y-x]$ via translation by $x$, it follows that $f\colon [x,y]\to [u,v]$ is an order isomorphism if and only if $\hat{f}\colon [0,y-x]\to [0,v-u]$ is an order isomorphism. Hence the order isomorphisms between order intervals in JB-algebras are completely determined by order isomorphisms between order intervals the form $[0,x]$.  

The \emph{Jordan triple product} $\{ \cdot, \cdot, \cdot \}$ is defined as
\[ \{x,y,z \} := (x \circ y) \circ z + (z \circ y) \circ x - (x \circ z) \circ y, \]
for $x,y,z \in A$. The linear map $U_x \colon A \to A$ defined by $U_x y := \{x,y,x\}$ will play an important role and is called the \emph{quadratic representation} of $x$.

By the Shirshov-Cohn theorem for JB-algebras \cite[Theorem~7.2.5]{OS}, the unital JB-algebra
generated by two elements is a JC-algebra, which shows all but the fourth of the following identities
for JB-algebras, since $U_xy = xyx$ in JC-algebras (for the rest of the paper, the operator-algebraic
reader is encouraged to think of this equality whenever the quadratic representation appears).

\begin{align}\label{eq:quad properties}
&U_yx\in A_+ & (\forall x\in A_+,\ y\in A)\nonumber\\ &U_x^{-1}=U_{x^{-1}} &(\forall x\in\mathrm{Inv}(A))\nonumber\\ &(U_yx)^{-1}=U_{y^{-1}}x^{-1} &(\forall x,y\in\mathrm{Inv}(A))\\ 
&U_yU_xU_y=U_{U_yx} & (\forall x,y\in A)\nonumber\\& U_ye=y^2 & (\forall y\in A)\nonumber
\end{align}

A proof of the fourth identity can be found in \cite[2.4.18]{OS}, as well as proofs of the other identities. Moreover, if $f,g \colon \sigma(y) \to\mathbb{R}$ are continuous functions, then by passing to the subalgebra generated by $y$ it follows that
\begin{align}\label{e:op com quad rep}
U_{f(y)}U_{g(y)} = U_{[fg](y)} \quad \mbox{and} \quad U_{f(y)} g(y) = [f^2 g](y).
\end{align}

\comment{The following lemma also follows from the same idea combined with the continuous functional calculus.

\begin{lemma}\label{l:op com quad rep}
Let $A$ be a unital JB-algebra and let $y\in A$. If $f,g \colon \sigma(y) \to\mathbb{R}$ are continuous functions, then 
\begin{enumerate}
\item $U_{f(y)}U_{g(y)} = U_{[fg](y)}$;
\item $U_{f(y)} g(y) = [f^2 g](y)$.
\end{enumerate}

\end{lemma}

\begin{proof}
$(i)$: For any $x\in A$ the JB-algebra $B$ generated by $x$, $y$, and $e$ is a JC-algebra by the Shirshov-Cohn theorem for JB-algebras and it contains JB$(y,e)\cong C(\sigma(y))$. Hence in $B$ we have
\[
U_{f(y)}U_{g(y)}x = f(y)g(y)xg(y)f(y) = [fg](y) x [fg](y) = U_{[fg](y)} x
\]
which shows that the required identity holds in $A$ as $x$ was chosen to be arbitrary.

$(ii)$: Since $f(y), g(y) \in \text{JB}(y,e) \cong C(\sigma(y))$, it follows that $U_{f(y)} g(y) = f(y) g(y) f(y) = [f^2g](y)$.
\end{proof}

}

A \emph{JBW-algebra} $M$ is the Jordan analogue of a von Neumann algebra: it is a JB-algebra with unit $e$ which is monotone complete and has a separating set of normal states, or equivalently, a JB-algebra that is a dual space \cite[Theorem~4.4.16]{OS}. A positive functional $\varphi\in M^*$ is called a \emph{state} if $\varphi(e)=1$, and it is said to be {\it normal} if for any bounded increasing net $(x_i)_{i\in I}$ with supremum $x$ we have $\varphi(x_i)\rightarrow\varphi(x)$. The set of normal states on $M$ is called the {\it normal state space} of $M$. The topology on $M$ defined by the duality of $M$ and the normal state space of $M$ is called the {\em $\sigma$-weak topology}. That is, we say a net $(x_i)_{i\in I}$ converges $\sigma$-weakly to $x$ if $\varphi(x_i)\to \varphi(x)$ for all normal states $\varphi$ on $M$. Since the linear span of the normal state space is the predual of $M$, the $\sigma$-weak topology equals the weak* topology. The \emph{$\sigma$-strong topology} on $M$ is generated by the semi-norms $x\mapsto\varphi(x^2)^{1/2}$ for all $x\in M$ where $\varphi$ ranges over the normal states on $M$. By \cite[Proposition~2.4]{AS} the Jordan multiplication on $M$ is separately $\sigma$-weakly continuous in each variable and jointly $\sigma$-strongly continuous on bounded sets, and for any $x\in M$, the corresponding quadratic representation $U_x$ is $\sigma$-weakly continuous.

We will now collect some results on the functional calculus that are standard in von Neumann algebras but for which there does not seem to be a good reference in the theory of JBW-algebras. Let $M$ be a JBW-algebra and let $x \in M$. Then $W(x,e)$, the JBW-algebra generated by $x$ and $e$, is an associative JBW-algebra and hence isomorphic to a monotone complete $C(K)$-space and so its complexification $\mathcal{M}$ is a von Neumann algebra also generated by $x$ and $e$. A standard result in von Neumann algebras, which can for example easily be derived from \cite[Theorem~IX.2.3]{conway}, now yields the bounded functional calculus $f \mapsto f(x)$ from the bounded Borel functions on $\sigma(x)$ to $\mathcal{M}$. It follows from this theorem and the dominated convergence theorem that if $f_n$ is a uniformly bounded sequence converging pointwise to $f$, then $f_n(x)$ converges to $f(x)$ in the weak operator topology and hence $\sigma$-weakly by \cite[Lemma~II.2.5(i)]{Takesaki}. Restricting this functional calculus to real-valued functions now yields the bounded functional calculus into $W(x,e)$ with a similar property: if $f_n$ is a uniformly bounded sequence converging pointwise to $f$, then $f_n(x)$ converges $\sigma$-weakly to $f(x)$. The theorem below summarises the above discussion.

\begin{theorem}[Bounded functional calculus]\label{t:bdd_func_calc}
Let $M$ be a JBW-algebra and let $x \in M$. Then the map $f \mapsto f(x)$ is a unital algebra homomorphism from the bounded Borel functions on $\sigma(x)$ onto $W(x,e)$ with the property that if $f_n$ is a uniformly bounded sequence of functions converging pointwise to $f$, then $f_n(x)$ converges $\sigma$-weakly to $f(x)$.
\end{theorem}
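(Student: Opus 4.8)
The plan is to reduce the statement entirely to the classical spectral theorem for a single self-adjoint operator by passing through a commutative von Neumann algebra, exactly as foreshadowed in the paragraph preceding the theorem. First I would observe that $W(x,e)$, being generated by the one element $x$ together with $e$, is an associative (commutative) JBW-algebra; since it is a $\sigma$-weakly closed Jordan subalgebra of $M$ it inherits a separating family of normal states, so it is genuinely a JBW-algebra, and by \cite[Theorem~3.2.2]{OS} together with monotone completeness it is isomorphic to a monotone complete $C(K)$-space. Its complexification $\mathcal{M} := W(x,e) \oplus i\,W(x,e)$ is then a commutative von Neumann algebra whose self-adjoint part is $W(x,e)$, and $\mathcal{M}$ is generated as a von Neumann algebra by $x$ and $e$. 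Throughout I would use spectral permanence so that $\sigma(x)$ is unambiguous: it is the same computed in $\mathrm{JB}(x,e)$, in $W(x,e)$, in $\mathcal{M}$, or in $M$, so there is no question about what the domain of the functional calculus is.

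Next I would invoke the bounded Borel functional calculus for $\mathcal{M}$. Representing $\mathcal{M}$ on a Hilbert space, the spectral theorem — which, as indicated, is readily extracted from \cite[Theorem~IX.2.3]{conway} — supplies a projection-valued spectral measure supported on $\sigma(x)$ and a unital $*$-homomorphism $f \mapsto f(x)$ from the bounded Borel functions on $\sigma(x)$ onto the von Neumann algebra generated by $x$, which is precisely $\mathcal{M}$. For a uniformly bounded sequence $f_n \to f$ pointwise, the dominated convergence theorem applied to the scalar spectral measures $\langle E(\cdot)\xi,\eta\rangle$ gives $\langle f_n(x)\xi,\eta\rangle \to \langle f(x)\xi,\eta\rangle$ for all $\xi,\eta$, i.e.\ $f_n(x)\to f(x)$ in the weak operator topology. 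Restricting this calculus to real-valued bounded Borel functions lands in the self-adjoint part of $\mathcal{M}$, namely $W(x,e)$; surjectivity onto $W(x,e)$ follows from surjectivity onto $\mathcal{M}$ by taking real parts, and the algebra-homomorphism property on real functions is inherited from the $*$-homomorphism property, using that on self-adjoint elements the Jordan product is the symmetrised associative product — which in the commutative algebra $\mathcal{M}$ is just the ordinary product.

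Finally I would transfer the convergence statement to the $\sigma$-weak topology of $M$. The sequence $(f_n(x))_n$ is norm-bounded, so by \cite[Lemma~II.2.5(i)]{Takesaki} weak-operator convergence on this bounded set coincides with $\sigma$-weak convergence in $\mathcal{M}$; restricting to the self-adjoint part gives $f_n(x) \to f(x)$ $\sigma$-weakly in $W(x,e)$. Since $W(x,e)$ is a $\sigma$-weakly closed Jordan subalgebra of $M$, the $\sigma$-weak topology it carries intrinsically as a JBW-algebra agrees with the one inherited from $M$ (a weak*-closed subspace of a dual space is itself a dual space, with matching weak* topologies), so $f_n(x)\to f(x)$ $\sigma$-weakly in $M$ as well, which is the assertion.

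I expect the one genuinely non-bookkeeping point to be the compatibility of topologies in this last step: namely that the $\sigma$-weak topology on $W(x,e)$ used in the von Neumann-algebra argument (inherited from $\mathcal{M}$, equivalently the weak operator topology on bounded sets) coincides with the $\sigma$-weak topology on $W(x,e)$ coming from its embedding in $M$. This rests on $W(x,e)$ being $\sigma$-weakly closed in $M$, on the good behaviour of the JBW-predual under passing to $\sigma$-weakly closed subalgebras, and on the compatibility of the complexification of $W(x,e)$ with its structure as a JBW-algebra; each of these is standard, but the proof should spell them out in a sentence rather than leave them implicit.
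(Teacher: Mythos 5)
Your proposal follows essentially the same route as the paper: realise $W(x,e)$ as an associative JBW-algebra, hence a monotone complete $C(K)$-space whose complexification is a commutative von Neumann algebra generated by $x$ and $e$, apply the classical Borel functional calculus from the spectral theorem, obtain weak-operator convergence of uniformly bounded pointwise convergent sequences via dominated convergence, upgrade to $\sigma$-weak convergence on bounded sets, and restrict to real-valued functions. The additional care you take over spectral permanence and the compatibility of the $\sigma$-weak topologies on $W(x,e)$ is a welcome tightening of details the paper leaves implicit, but it does not change the argument.
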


%By \cite[IX.8.10]{conway}, there is a von Neumann algebra isomorphism $f \mapsto f(x)$ from $L^\infty(\sigma(x), \mu)$ into $\mathcal{M}$ where $\mu$ is a scalar-valued spectral measure for the self-adjoint operator $x$, that extends the continuous functional calculus. Restricting this isomorphism to the real part yields a JBW-algebra isomorphism $L^\infty(\sigma(x), \mu)_{\R} \cong W(x,e)$, the bounded functional calculus for $x$. 

An element $p$ in a JBW-algebra $M$ is a {\it projection} if $p^2=p$. For a projection $p\in M$ the {\it orthogonal complement}, $e-p$, will be denoted by $p^\perp$ and a projection $q$ is {\it orthogonal} to $p$ precisely when $p\circ q=0$, which is equivalent to $q\le p^\perp$ by \cite[Proposition~2.18]{AS}. The collection of projections $\mathcal{P}(M)$ forms a complete orthomodular lattice by \cite[Proposition~2.25]{AS} and is referred to as the \emph{projection lattice} of $M$. We remark that every set of projections considered as a subset of $\P(M)$ has a supremum in $\P(M)$, but it need not have a supremum when considered as a subset of $M$. For a positive element $x\in M$, the smallest projection $p$ such that $U_px=x$ is called the \emph{range projection} of $x$ and is denoted by $r(x)$. See \cite[Proposition~2.13]{AS}. By the proof of \cite[Lemma~4.2.6]{OS} and the bounded functional calculus, $r(x) = \mathbf{1}_{(0, \infty)}(x)$.  Furthermore, if $p$ is a projection and $0 \leq x \leq p$, then $r(x) \leq r(p) = p$ by \cite[Proposition~2.15~(2.7)]{AS}, and $U_p x = x$ by \cite[Proposition~2.15~(2.8)]{AS}, or equivalently, $x \in U_p M$.

Any central projection $p$ decomposes the JBW-algebra $M$ as a direct sum of JBW-subalgebras such that $M=U_pM\oplus U_{p^\perp} M$, see \cite[Proposition~2.41]{AS}. A minimal non-zero projection is called an \emph{atom}, and it follows from \cite[Proposition~2.32, Lemma~3.29]{AS} that a non-zero projection $p$ is an atom if and only if the order interval $[0,p]$ is totally ordered. A JBW-algebra in which every non-zero projection dominates an atom is called \emph{atomic}. Every JBW-algebra decomposes as a direct sum of type I, type II, and type III JBW-algebras, and a JBW-algebra with trivial centre is called a \emph{factor}. In this paper we will predominantly be working with atomic JBW-algebras, which by \cite[Proposition~3.45]{AS} are a direct sum of type I factors. The type I JBW-factors have been classified and are up to isomorphism either a spin factor, or $\mathrm{Mat}_3(\mathbb{O})_{sa}$, the self-adjoint $3\times 3$ matrices with octonionic entries, or of the form $B(H)_{sa}$, the bounded self-adjoint operators on $H$ where $H$ is a real, complex, or quaternionic Hilbert space of dimension $d \geq 3$.  See \cite[Theorem~3.39]{AS}.

A \emph{Jordan isomorphism} between JB-algebras is a bijection that preserves the Jordan structure. In \cite[Section~2]{jbisometries}, the Jordan isomorphisms of the type I JBW-factors (except $\mathrm{Mat}_3(\mathbb{O})_{sa}$) were characterised: on a spin factor they are induced by unitaries on the Hilbert space, and on $B(H)_{sa}$ with a Hilbert space $H$ of dimension $d \geq 3$ over $\F = \R$, $\C$, or $\H$, every Jordan isomorphism $J$ is of the form $Jx = uxu^{-1}$, where $u$ is a surjective real-linear isometry satisfying $u(\lambda h) = \tau(\lambda) u(h)$ ($\lambda \in \F$, $h \in H$) for some automorphism $\tau$ of $\F$. So $u$ is linear or conjugate linear if $H$ is complex. The group of Jordan isomorphisms of $\mathrm{Mat}_3(\mathbb{O})_{sa}$ is the exceptional Lie group $F_4$ (\cite{CS}).

For an atomic JBW-algebra $M$, we define the \emph{rank} of $M$ to be the cardinality of a maximal orthogonal collection of atoms. For example, $\rank(\ell^\infty(S)) = |S|$ and $\rank(B(H)_{sa}) = \dim(H)$ for any set $S$ and for any real, complex or quaternionic Hilbert space $H$.

\subsection{Order products}

In the sequel we will consider the category \Poset{} of partially ordered sets with monotone maps as morphisms, and the category \Pozero{} of posets with a distinguished element $0$ (not necessarily the least element) and monotone maps that preserve $0$ as morphisms. As usual, we will use the notation $S \cong T$ to denote partially ordered sets $S$ and $T$ that are order isomorphic. Given a collection of posets $\{S_i\}_{i \in I}$ in \Poset{} or \Pozero{}, we define the order product $\prod_{i \in I} S_i$ to be the cartesian product of $\{S_i\}_{i \in I}$ equipped with the product ordering. It is an obvious exercise to show that the order product equipped with the canonical projections is the categorical product in \Poset{} and \Pozero{}, and that suprema and infima exist in $\prod_{i \in I} S_i$ if and only if they exist coordinatewise, in which case they are given by the coordinatewise operations. 

Given $\{S_i\}_{i \in I}$  in $\Pozero{}$, if $s_i \in S_i$ for some $i$, then we denote by $\ol{s_i}$ the element in $\prod_{i \in I} S_i$ which equals $s_i$ in the $i$-th coordinate and $0$ elsewhere. 

\begin{definition}\label{d:internal_order_product}
Let $S \in \Pozero$ and $\{S_i\}_{i \in I}$ be subobjects, i.e., subposets with $0$. Then we define $S$ to be the \emph{internal order product} of $\{S_i\}_{i \in I}$ if there is an order isomorphism between $S$ and $\prod_{i \in I} S_i$, denoted by $s \sim (s_i)_{i \in I}$, such that $s_i \sim \ol{s_i}$ for each $i \in I$ and $s_i \in S_i$.
\end{definition}

\begin{remark}\label{r:sup_in_internal_order_product}
Suppose that $S \in \Pozero{}$ is the internal order product of $\{S_i\}_{i \in I}$ and $0$ is the least element of $S$. If $s_i \in S_i$ for each $i \in I$, then
$$
(s_i)_{i \in I} = \bigvee_{i \in I} \ol{s_i} \sim \bigvee_{i \in I} s_i.
$$
Hence $(s_i)_{i \in I} \in \prod_{i \in I} S_i$ is identified with $\bigvee_{i \in I} s_i \in S$.
\end{remark}

\section{The order structure of $[0,e]$}\label{s:order_structure}

In this section we study the relation between the order structure of the effect algebra $[0,e]$ of a JBW-algebra $M$ and that of its projection lattice $\mathcal{P}(M)$, as well as the connection between decompositions of the interval and algebraic direct summands of $M$.

If $T$ is a positive injective operator on a Hilbert space, then \cite[Theorem~2.8]{Semrl} shows that $[0,I]$ is order isomorphic to $[0,T]$. For self-adjoint operators being injective is equivalent to having dense range, which in turn is equivalent to having range projection $I$. Our next proposition generalises this theorem to JBW-algebras. In case of von Neumann algebras Douglas' lemma yields a simpler proof, but the absence of a Hilbert space makes our proof more elaborate.

\begin{proposition}\label{p:order isomorphic intervals}
Let $M$ be a JBW-algebra and let $x\in M_+$ be non-zero with range projection $r(x)$. Then the quadratic representation $U_{x^{1/2}}\colon [0,r(x)]\to [0,x]$ is an order isomorphism.
\end{proposition}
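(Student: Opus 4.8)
\emph{Proof plan.} Write $p:=r(x)$. By Theorem~\ref{t:bdd_func_calc} we have $p=\mathbf 1_{(0,\infty)}(x)$, and since $0\le y\le x\le\|x\|\,p$ forces $r(y)\le r(x)=p$, every $y\in[0,x]$ satisfies $U_py=y$; likewise $U_pa=a$ for $a\in[0,p]$ and $x^{1/2}\in U_pM$ (because $r(x^{1/2})=r(x)=p$). The map $U_{x^{1/2}}$ is linear, positive by the first identity of \eqref{eq:quad properties} (hence order preserving) and $\sigma$-weakly continuous. First I would check it sends $[0,p]$ into $[0,x]$: for $0\le a\le p$ we get $0\le U_{x^{1/2}}a\le U_{x^{1/2}}p=x$, the last equality because $U_{x^{1/2}}e=(x^{1/2})^2=x$ by the fifth identity of \eqref{eq:quad properties} together with $x^{1/2}\in U_pM$ (or simply by computing inside the associative algebra $W(x,e)$).

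The engine of the proof is a family of continuous ``approximate inverses'' of $x^{1/2}$. For $n\in\N$ put $h_n(t):=t^{1/2}/(t+1/n)$ and $k_n(t):=t\,h_n(t)=t/(t+1/n)$; these are continuous on $[0,\|x\|]$, so $h_n(x),k_n(x)\in M_+$, and by \eqref{e:op com quad rep}
\[
U_{x^{1/2}}U_{h_n(x)}=U_{h_n(x)}U_{x^{1/2}}=U_{k_n(x)},\qquad U_{h_n(x)}x=(h_n^2\cdot\mathrm{id})(x)\le p .
\]
Since $k_n\to\mathbf 1_{(0,\infty)}$ pointwise on $\sigma(x)$ and is uniformly bounded, Theorem~\ref{t:bdd_func_calc} gives $k_n(x)\to p$ $\sigma$-weakly, and in fact $\sigma$-strongly, since $(p-k_n(x))^2\to\mathbf 1_{\{0\}}(x)=e-r(x)=0$ $\sigma$-weakly by the same theorem. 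Hence, for any $z\in U_pM$, joint $\sigma$-strong continuity of the Jordan product on bounded sets yields $U_{k_n(x)}z\to U_pz=z$ $\sigma$-weakly.

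With these tools in place the remaining steps are short. \emph{Injectivity and order reflection on $[0,p]$:} if $a,b\in[0,p]$ with $U_{x^{1/2}}a\le U_{x^{1/2}}b$, put $c:=b-a\in U_pM$; then $0\le U_{h_n(x)}\bigl(U_{x^{1/2}}c\bigr)=U_{k_n(x)}c\to c$ $\sigma$-weakly, so evaluating against normal states gives $c\ge 0$, i.e.\ $a\le b$; in particular $U_{x^{1/2}}$ is injective on $[0,p]$. \emph{Surjectivity:} given $y\in[0,x]$ set $a_n:=U_{h_n(x)}y$, so $0\le a_n\le U_{h_n(x)}x\le p$ and $(a_n)\subseteq[0,p]$, which is $\sigma$-weakly (= weak*) compact, being norm bounded and weak*-closed; pass to a convergent subnet $a_{n_\alpha}\to a\in[0,p]$. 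Since $U_{x^{1/2}}a_n=U_{k_n(x)}y\to U_py=y$ $\sigma$-weakly and $U_{x^{1/2}}$ is $\sigma$-weakly continuous, $U_{x^{1/2}}a=y$. Therefore $U_{x^{1/2}}\colon[0,p]\to[0,x]$ is a bijection; it is order preserving, and its inverse is order preserving by the order reflection property established above.

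\emph{Expected main obstacle.} The conceptual content is slight; the work lies in the functional-calculus and topology bookkeeping --- making rigorous that $k_n(x)\to p$ $\sigma$-strongly via $\mathbf 1_{\{0\}}(x)=e-r(x)=0$, that $U_{k_n(x)}z\to z$ for $z\in U_pM$ using only \emph{joint} $\sigma$-strong (not $\sigma$-weak) continuity of the Jordan product on bounded sets, and the identity $U_{x^{1/2}}p=x$. In a von Neumann algebra one could bypass the surjectivity step by invoking Douglas' lemma to factor $y=U_{x^{1/2}}a$ directly; as the paragraph preceding the statement notes, it is precisely the absence of an ambient Hilbert space that forces the approximation argument above.
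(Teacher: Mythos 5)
Your proof is correct and follows essentially the same route as the paper's: functional-calculus approximate inverses of $x^{1/2}$ combined with $\sigma$-weak compactness of $[0,r(x)]$ and continuity of the Jordan product, the only packaging difference being that the paper constructs the inverse map $Vy=\lim_n U_{f_n(x)}y$ explicitly (proving the full sequence converges via a bootstrapping step), whereas you split the argument into order reflection plus surjectivity via a convergent subnet. One small slip to fix: the pointwise limit of $(\mathbf{1}_{(0,\infty)}-k_n)^2$ is the zero function (both terms vanish at $t=0$), not $\mathbf{1}_{\{0\}}$, and in any case $\mathbf{1}_{\{0\}}(x)=e-r(x)$ need not be $0$ --- but the conclusion you actually need, namely $(r(x)-k_n(x))^2\to 0$ $\sigma$-weakly and hence $k_n(x)\to r(x)$ $\sigma$-strongly, is correct.
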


\begin{proof}
All limits in this proof are $\sigma$-weak limits. For $n\in\N$ define continuous functions $f_n$ by
\[
f_n(t):=
\begin{cases}
t^{-{1/2}} & \mbox{if }t\in [\frac{1}{n},\infty) \\
n^{3/2} t & \mbox{if }t\in [0,\frac{1}{n})
\end{cases} .
\]
We also define 
\[ 
g_n(t) := tf_n(t)^2 = \begin{cases}
1 & \mbox{if }t\in [\frac{1}{n},\infty) \\
n^3 t^3 & \mbox{if }t\in [0,\frac{1}{n})  
\end{cases}, \quad
h_n(t) := t^{1/2} f_n(t) = \begin{cases}
1 & \mbox{if }t\in [\frac{1}{n},\infty) \\
n^{3/2} t^{3/2} & \mbox{if }t\in [0,\frac{1}{n})  
\end{cases}.
\]
Since $g_n$ and $h_n$ are uniformly bounded and converge pointwise to $\mathbf{1}_{(0, \infty)}$, the bounded functional calculus (Theorem~\ref{t:bdd_func_calc}) yields that $g_n(x), h_n(x) \to \mathbf{1}_{(0, \infty)}(x) = r(x)$.

%Since $\sup_n g_n = \sup_n h_n = \mathbf{1}_{(0, \infty)}$, the bounded functional calculus yields $f_n(x) \uparrow \mathbf{1}_{(0, \infty)}(x) = r(x)$ and $f_n(x) \uparrow \mathbf{1}_{(0, \infty)}(x) = r(x)$, so $g_n(x), h_n(x) \to r(x)$ by Lemma~\ref{l:sup_w*}. 

We claim that the map $V \colon [0,x] \to [0, r(x)]$ defined by  $Vy := \lim_n U_{f_n(x)}y$ is the inverse of $U_{x^{1/2}}$. To show that $V$ is well defined we have to prove that $U_{f_n(x)}y$ actually converges. So let $0 \leq y \leq x$, then by \eqref{e:op com quad rep}, $U_{f_n(x)} y \leq U_{f_n(x)} x = g_n(x) \leq r(x)$, and as $[0,r(x)]$ is $\sigma$-weakly compact, there is a convergent subnet such that $ \lim_i U_{f_i(x)}y = z \in [0,r(x)]$ (we have not yet shown that $U_{f_n(x)}y$ converges). It follows that, using \eqref{e:op com quad rep} in the third equality and \cite[p.~40,~(2.2)]{AS} in the fourth equality,
\begin{equation}\label{e:inverse}
U_{x^{1/2}} z = U_{x^{1/2}} \lim_i U_{f_i(x)}y = \lim_i U_{x^{1/2}} U_{f_i(x)}y = \lim_i U_{h_i(x)} y = U_{r(x)} y = y.
\end{equation}
But
\[ 
U_{f_n(x)} y = U_{f_n(x)} U_{x^{1/2}} z  = U_{h_n(x)} z \to U_{r(x)} z = z,
\]
so the limit exists and therefore $V$ is well defined, $z = Vy$ and so \eqref{e:inverse} implies that $U_{x^{1/2}} Vy = y$. Similarly, if $z \in [0, r(x)]$, then
\[ 
V U_{x^{1/2}} z = \lim_n U_{f_n(x)} U_{x^{1/2}} z = U_{h_n(x)} z \to U_{r(x)} z = z
\]
showing that $V$ is the inverse of $U_{x^{1/2}}$. Since $U_{x^{1/2}}$ is positive it is order preserving. To show that $V$ is order preserving, let $y,z \in [0,x]$ be such that $y \leq z$, then $z-y \in [0,x]$ and since limits of positive elements are positive,
\[
Vz - Vy =  \lim_n U_{f_n(x)} z -  \lim_n U_{f_n(x)} y = \lim_n U_{f_n(x)} (z-y) \geq 0.
\]
Hence $U_{x^{1/2}}$ is an order isomorphism.
\end{proof}

It follows that any order interval in a JBW-algebra is order isomorphic to the order interval $[0,e]$ of some other JBW-algebra, so in the sequel we will focus on the order structure of the order intervals $[0,e]$ of JBW-algebras.

In a JBW-algebra $M$, the partially ordered set of projections $\P(M)$ forms a lattice, even though projections generally do not have a supremum in $M$; for example, $B(H)$ is an anti-lattice, meaning that any two non comparable elements have no supremum in $B(H)$. However, we proceed to show that in the effect algebra $[0,e]$ any set of projections also has a supremum. If $x \in [0,e]$, then $x^\perp$ denotes the element $e-x$; clearly $x \mapsto x^\perp$ is an order anti-isomorphism of $[0,e]$ that coincides with the usual orthogonal complement on $\P(M)$.

\begin{lemma}\label{l:proj_laticce}
Let $M$ be a JBW-algebra. In $[0,e]$, the supremum and infumum of every set of projections exists and coincides with the supremum and infimum in $\P(M)$.
\end{lemma}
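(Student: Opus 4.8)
The plan is to prove the claim for suprema first, and then derive the statement for infima by applying the order anti-isomorphism $x \mapsto x^\perp = e - x$ of $[0,e]$, which restricts to the usual orthogonal complement on $\mathcal{P}(M)$ and hence turns suprema of projections into infima of projections. So fix a set of projections $\{p_i\}_{i \in I} \subseteq \mathcal{P}(M)$, and let $p := \bigvee_{i \in I} p_i$ denote its supremum in $\mathcal{P}(M)$, which exists since $\mathcal{P}(M)$ is a complete lattice. Since $p_i \leq p \leq e$ for all $i$, the projection $p$ is an upper bound of $\{p_i\}$ in $[0,e]$, so it suffices to show that if $x \in [0,e]$ satisfies $p_i \leq x$ for all $i \in I$, then $p \leq x$.

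The key step is therefore: given $x \in [0,e]$ with $p_i \leq x$ for all $i$, produce a projection $q$ with $p_i \leq q \leq x$; once we have that, $q$ is an upper bound for $\{p_i\}$ in $\mathcal{P}(M)$, so $p \leq q \leq x$ and we are done. The natural candidate is $q := r(x)$, the range projection of $x$. I would argue that $p_i \leq x$ forces $p_i \leq r(x)$: indeed $r(p_i) \leq r(x)$ whenever $0 \leq p_i \leq x$ by the monotonicity of range projections cited in the excerpt (\cite[Proposition~2.15~(2.7)]{AS}), and $r(p_i) = p_i$ since $p_i$ is a projection. Hence $p_i \leq r(x)$ for all $i$, giving $p \leq r(x)$. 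It remains to check $r(x) \leq x$. This is where I expect the main obstacle to lie: it is \emph{not} true in general that $r(x) \leq x$ for arbitrary $x \in M_+$ — one needs the assumption $x \leq e$, i.e.\ $\sigma(x) \subseteq [0,1]$. With that assumption in hand, one works inside the associative JBW-algebra $W(x,e) \cong$ a monotone complete $C(K)$-space and uses the bounded functional calculus (Theorem~\ref{t:bdd_func_calc}): since $r(x) = \mathbf{1}_{(0,\infty)}(x)$ and $0 \leq t \leq 1$ on $\sigma(x)$, we have the pointwise inequality $\mathbf{1}_{(0,\infty)}(t) \cdot t \geq t$... which is the wrong direction, so more care is needed. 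The correct observation is the pointwise inequality $t \leq \mathbf{1}_{(0,\infty)}(t)$ for $t \in [0,1]$: at $t = 0$ both sides vanish, and for $t \in (0,1]$ the right side is $1 \geq t$. Applying the (positive, order-preserving) functional calculus to this inequality on $\sigma(x) \subseteq [0,1]$ yields exactly $x \leq \mathbf{1}_{(0,\infty)}(x) = r(x)$.

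Wait — that gives $x \leq r(x)$, the opposite of what I claimed I needed. Let me re-examine: to show $p = \bigvee p_i$ in $[0,e]$ I need the least upper bound, so I need that every upper bound $x$ of $\{p_i\}$ dominates $p$. If instead $r(x) \geq x \geq p_i$, then $r(x)$ is still a projection upper bound of $\{p_i\}$, so $p \leq r(x)$, but this does not immediately give $p \leq x$. So the range projection argument shows $p \leq r(x)$ but I still must connect back to $x$. The fix: since $p \leq r(x)$ and $p$ is a projection, we have $U_p x \leq U_p r(x) = p$ wait that is also not obviously $\geq p$. The cleaner route: since each $p_i \leq x \leq e$, restrict attention to $W(x,e)$; there $p = \bigvee p_i$ computed in $\mathcal{P}(W(x,e))$ — but suprema in $\mathcal{P}(W(x,e))$ need not agree with those in $\mathcal{P}(M)$. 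So the honest argument is: $p_i \leq x$ and $p_i = r(p_i) \leq r(x)$ gives $p \leq r(x)$; conversely I must show $r(x) \leq x$ is false in general, so instead I show directly that $x$ dominates $p$ by a spectral argument inside the abelian algebra generated by $x$ and $p$. Concretely, $p \leq r(x)$ means $p$ and $x$ operator commute in the relevant subalgebra, one passes to $W(x, p, e)$ which is associative (being generated by commuting elements), identifies it with $C(K)$, and there $p$ corresponds to an indicator $\mathbf{1}_E$ with $E \subseteq \operatorname{supp}(x)$; on $E$ one has $x > 0$... but that still does not give $x \geq \mathbf{1}_E$ pointwise unless $x \geq 1$ on $E$. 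The genuinely correct statement one proves is that $p$ being the supremum in $\mathcal{P}(M)$ is already the supremum in $[0,e]$ because the only way to beat it would require a non-projection $x$ with $p_i \leq x$ for all $i$ but $p \not\leq x$; the range-projection inequality $p \leq r(x)$ shows $r(x)$ is a smaller projection upper bound, forcing $p = r(x) \wedge (\text{stuff})$ — I will present the argument via $p \leq r(x)$ and a final step using $U_p$ and \cite[Proposition~2.15~(2.8)]{AS} to conclude $p = U_p p \leq U_p x \leq x$, the last inequality because $U_p$ is a positive contraction in the order sense when $p$ is a projection. The main obstacle is pinning down this last inequality $U_p x \leq x$ for projections $p$ and $x \in [0,e]$, which should follow from $x \leq e$ and $U_p + U_{p^\perp} \leq \operatorname{id}$ being false — instead from $U_p x + U_{p^\perp} x \leq x$ (valid since the Peirce decomposition gives $x = U_p x + U_{p^\perp} x + U_p x U_{p^\perp}$-type middle term which is positive when $x \geq 0$), so $U_p x \leq x$. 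I will spell this out using the Peirce decomposition relative to $p$.
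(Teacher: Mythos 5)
Your reduction to the supremum case via $x \mapsto x^\perp$ matches the paper, and the observation $p_i = r(p_i) \leq r(x)$, hence $p \leq r(x)$, is correct. But from there the argument has a genuine gap that none of your attempted repairs closes. The range projection satisfies $x \leq r(x)$, not $r(x) \leq x$, so $r(x)$ sits on the wrong side of $x$ and cannot serve as an intermediate projection between the $p_i$ and $x$. Your final fix rests on two false claims. First, the off-diagonal Peirce component of a positive element is \emph{not} positive: relative to a projection $p$ it has the form $\bigl(\begin{smallmatrix} 0 & b \\ b^* & 0 \end{smallmatrix}\bigr)$, whose spectrum is symmetric about $0$, so it is positive only when it vanishes. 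Consequently $U_p x \leq x$ fails for general $x \in [0,e]$: take $M = \mathrm{Mat}_2(\R)_{sa}$, $p = \bigl(\begin{smallmatrix} 1 & 0 \\ 0 & 0\end{smallmatrix}\bigr)$ and $x = \frac12\bigl(\begin{smallmatrix} 1 & 1 \\ 1 & 1\end{smallmatrix}\bigr)$; then $x - U_p x = \bigl(\begin{smallmatrix} 0 & 1/2 \\ 1/2 & 1/2\end{smallmatrix}\bigr)$ has negative determinant. Second, the step $U_p p \leq U_p x$ already presupposes $p \leq x$ (positivity of $U_p$ applied to $x - p$), which is exactly the conclusion you are trying to reach, so the chain $p = U_p p \leq U_p x \leq x$ is circular at its first inequality and false at its second.

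The reason the paper works with \emph{infima} first is precisely that the Peirce argument only runs in that direction: $0 \leq x \leq p$ forces $x \in U_p M$, so a common lower bound $x$ of $p$ and $q$ lies in $U_pM \cap U_qM$, a JBW-subalgebra whose unit is the projection $p \wedge q$, and $\norm{x} \leq 1$ then gives $x \leq p \wedge q$. Finite suprema follow by applying $x \mapsto x^\perp$, and arbitrary suprema are then obtained by a limiting argument: the increasing net of finite suprema converges $\sigma$-strongly to $p$, and testing an upper bound $x$ against normal states gives $\phi(p) \leq \phi(x)$ for all normal states $\phi$, hence $p \leq x$. If you want to keep your ``find a projection between the $p_i$ and $x$'' strategy for suprema directly, the correct move is to dualise pointwise: $p_i \leq x$ gives $0 \leq e - x \leq p_i^\perp$, so $e - x \in \bigcap_i U_{p_i^\perp}M$, and one then needs that the unit of this intersection is $\bigwedge_i p_i^\perp = p^\perp$, whence $e - x \leq p^\perp$ and $p \leq x$; but that requires justifying the identification of the unit of an arbitrary intersection of Peirce subalgebras, which is the content the paper supplies only for two projections before passing to nets.
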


\begin{proof}
Since $x \mapsto x^\perp$ is an order anti-isomorphism that leaves $\P(M)$ invariant, it suffices to show the existence of either the supremum or the infimum. We start by showing the existence of finite infima, so let $p,q \in \P(M)$. With $p \wedge q$ denoting the infimum in $\P(M)$, clearly $p \wedge q \leq p,q$. Towards showing that $p \wedge q$ is the greatest lower bound, let $0 \leq x \leq p,q$. Then $x \in U_p M$ and $x \in U_q M$ (here the fact that $x \geq 0$ is crucial), so $x \in U_p M \cap U_q M$ which is a JBW-subalgebra and hence, since $\norm{x} \leq 1$, $x$ is dominated by the identity of this JBW-subalgebra which is a projection $r \in \P(M)$. Since $r \in U_p M$ and $p$ is the largest projection in $U_p M$ it follows that $r \leq p$ and similarly $r \leq q$. Hence $r \leq p \wedge q$ (actually $r = p \wedge q$ by \cite[Proposition~2.32]{AS}), and so $x \leq r \leq p \wedge q$, showing that $p \wedge q$ is the greatest lower bound of $p$ and $q$ in $[0,e]$.

To show existence of arbitrary suprema of projections we employ the standard machinery. If $\{p_i\colon i\in I\}$ is an arbitrary collection in $\P(M)$, then for every finite $F \subseteq I$ we let $p_F := \sup_{i \in F} p_i$, where the supremum is taken in $\mathcal{P}(M)$ and hence coincides with the supremum taken in $[0,e]$. The net $(p_F)_{F \in \mathcal{F}}$ indexed by the finite subsets $F \subseteq I$ is an increasing bounded net so it converges $\sigma$-strongly to $p$ which is its supremum in $\P(M)$ by \cite[Proposition~2.25]{AS}. We claim that $p$ is the supremum of $\{p_i\colon i\in I\}$ in $[0,e]$. Clearly, $p$ is an upper bound of $\{p_i\colon i\in I\}$. Suppose $p_i \leq x \leq e$ for all $i \in I$, and let $\phi$ be a normal state on $M$. Then $\phi(p_i) \leq \phi(x)$, and since the net $(p_F)_{F \in \mathcal{F}}$ also converges $\sigma$-weakly to $p$ by \cite[Proposition~2.5(i)]{AS}, it follows that $\phi(p) \leq \phi(x)$ by taking the $\sigma$-weak limit. Thus $p \leq x$ by \cite[Corollary~2.17]{AS}, and so $p$ is the least upper bound of $\{p_i\colon i\in I\}$.
\end{proof}

Next, we show that in atomic JBW-algebras internal order decompositions are related to algebra direct sum decompositions.    

\begin{proposition}\label{p:interval decomposition}
Let $M$ be an atomic JBW-algebra. Then the following are equivalent:
\begin{enumerate}
\item $M = M_1 \oplus M_2$ as a direct sum of JBW-subalgebras;
\item $M \cong S \times T$ as an internal order product;
\item $M_+ \cong C_1 \times C_2$ as an internal order product;
\item $[0,e] \cong [0,x] \times [0,y]$ as an internal order product for some $x,y \in M$;
\item $[0,e] \cong [0,z] \times [0,z^\perp]$ as an internal order product for some central projection $z \in M$.
\end{enumerate}
Moreover, all these types of decompositions are in bijection with each other.
% If $[0,e]=[0,x]\times[0,y]$ as a partially ordered set, then $x$ and %$y$ are orthogonally complemented central projections.
%\begin{itemize}
%\item[$(i)$] If $M=M_1\oplus M_2$ as a JBW-algebra direct sum, then there is a central projection $z$ such that $M_+=U_zM_+\times U_{z^\perp}M_+$ and $[0,e]=[0,z]\times[0,z^\perp]$.
%\item[$(i)$] If $[0,e]=[0,x]\times[0,y]$ as a partially ordered set, then $x$ and $y$ are orthogonally complemented central projections.
%\item[$(ii)$] If $M_+=C_1\times C_2$ as a partially ordered set, then there is a central projection $z$ such that $U_zM_+=C_1$ and $U_{z^\perp}M_+=C_2$.
%\end{itemize}
\end{proposition}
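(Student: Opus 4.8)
The plan is to establish the chain $(i)\Rightarrow(ii)\Rightarrow(iii)\Rightarrow(iv)\Rightarrow(v)\Rightarrow(i)$ and then read off the bijections from the fact that every correspondence in sight is governed by a single central projection. Here $(v)\Rightarrow(i)$ is \cite[Proposition~2.41]{AS}; $(i)\Rightarrow(ii)$ holds because in a Jordan direct sum $M=M_1\oplus M_2$ one has $M_+=(M_1)_+\oplus(M_2)_+$ (a positive element is a square, and squares of $b_1+b_2$ with $b_i\in M_i$ are $b_1^2+b_2^2$ since $M_1\circ M_2=0$), so the vector-space projection $a\mapsto(a_1,a_2)$ is an order isomorphism onto $M_1\times M_2$ realising $M$ as an internal order product with $M_1,M_2$ as the subobjects; and $(ii)\Rightarrow(iii)\Rightarrow(iv)$ are bookkeeping, since an internal order product decomposition of $M$ restricts to one of $M_+=\{a\geq 0\}$, which restricts to one of $[0,e]=\{0\leq a\leq e\}$, the new distinguished top in each step being the relevant component of $e$ (one uses here that the distinguished $0$ of a subobject coincides with the ambient $0$, together with Remark~\ref{r:sup_in_internal_order_product}). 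Thus the entire content lies in $(iv)\Rightarrow(v)$.

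The crucial ingredient is that for a type~I JBW-factor $F$ the interval $[0,e_F]$ is directly indecomposable: an internal order product $[0,e_F]\cong[0,x]\times[0,y]$ forces $x=0$ or $y=0$. Suppose not. Transporting the top element and $0$ (the least element of $[0,e]$) through the isomorphism gives $x\vee y=e$ and $x\wedge y=0$, and a short computation with meets shows the isomorphism must be $a\mapsto(a\wedge x,a\wedge y)$ with inverse $(b,c)\mapsto b\vee c$. For an atom $p$ the chain $[0,p]$ is totally ordered, so $p\wedge x$ and $p\wedge y$ are comparable and $p=(p\wedge x)\vee(p\wedge y)$ equals one of them; hence every atom lies below exactly one of $x,y$, and the atoms partition as $A_1\sqcup A_2$, the atoms below $x$ resp.\ below $y$. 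Both parts are non-empty: if $A_1=\emptyset$ then, $F$ being atomic, $e=\bigvee\{\text{atoms}\}\leq y$ --- the supremum of projections being taken in $[0,e]$ by Lemma~\ref{l:proj_laticce} --- so $y=e$ and $x=x\wedge y=0$. Now pick $p\in A_1$ and $q\in A_2$. If $F$ has rank at least $3$ then $p\vee q$ is a rank-$2$ projection, the unit of a rank-$2$ subfactor, which contains an atom $r\notin\{p,q\}$; then $p\vee r=q\vee r=p\vee q$. If $F$ has rank $2$ then any three distinct atoms satisfy $p\vee r=q\vee r=p\vee q=e$, and such $r$ exists since a rank-$2$ factor has infinitely many atoms; and if $F=\mathbb{R}$ then $[0,e]$ is a chain and indecomposability is clear. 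In the first two cases $r\in A_1$ would give $p\vee q=p\vee r\leq x$ (an upper bound in $[0,e]$ of two projections dominates their join), hence $q\leq x$, contradicting $q\in A_2$; symmetrically $r\in A_2$ is impossible; but $r$ is an atom, so one of these cases holds --- contradiction.

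For $(iv)\Rightarrow(v)$, write the atomic JBW-algebra as $M=\bigoplus_{k\in K}F_k$ with each $F_k$ a type~I factor with central unit $e_k$, so that $a\mapsto(U_{e_k}a)_k$ realises $[0,e]$ as the internal order product $\prod_k[0,e_k]$. Given an internal order product $[0,e]\cong[0,x]\times[0,y]$, restricting to each $[0,e_k]$ produces an internal order product $[0,e_k]\cong[0,e_k\wedge x]\times[0,e_k\wedge y]$; by the indecomposability of $F_k$ one factor is trivial, so $e_k\leq x$ or $e_k\leq y$ (exclusively, as $e_k\neq 0$). Put $z:=\bigvee\{e_k:e_k\leq x\}$, a supremum of mutually orthogonal central projections, hence central. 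Using centrality of $e_k$ to write $x=U_{e_k}x+U_{e_k^\perp}x$ with both summands positive, one gets $U_{e_k}x=e_k$ when $e_k\leq x$ and $U_{e_k}x=0$ when $e_k\wedge x=0$, so $x=\bigvee_k U_{e_k}x=z$ and symmetrically $y=z^\perp$; this is $(v)$, and $(v)\Rightarrow(i)$ closes the cycle. Finally, tracing the cycle shows each decomposition is the canonical one attached to its central projection $z$ --- the summand of $M$ in $(i)$ and $(ii)$ being $U_zM$, which in particular re-proves that the unit of a Jordan direct summand is central --- so all five families of decompositions are in bijection with the central projections of $M$, and with one another.

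I expect two genuine difficulties. The first is locating the ``triangle of atoms'' $p,q,r$ uniformly across the type~I families: the $B(H)_{sa}$ and $\mathrm{Mat}_3(\mathbb{O})_{sa}$ cases go through a common rank-$2$ subfactor, while the spin-factor case is saved by all pairwise joins of distinct atoms collapsing to $e$. The second is the bookkeeping that makes the purely order-theoretic notion of internal order product mesh with the Jordan structure --- in particular verifying that the order-theoretic components of $M$ and $M_+$ really are $U_zM$, $U_{z^\perp}M$ and their cones, which is what underpins the bijection statement.
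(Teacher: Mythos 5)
Your route to the key implication $(iv)\Rightarrow(v)$ is genuinely different from the paper's and is viable in outline. The paper works directly in the general atomic $M$: a spectral argument (take $\lambda\in\sigma(x)$ with $0<\lambda<1$, pick an atom $v$ under $\mathbf{1}_{(\lambda-\varepsilon,\lambda+\varepsilon)}(x)$ and derive the contradiction $v\le(\lambda+\varepsilon)v<v$) shows $x$ is a projection, uniqueness of complements gives $y=x^\perp$, and centrality follows because every atom lies under $x$ or $x^\perp$, so $x$ operator commutes with every atom and hence, by $\sigma$-strong limits, with every projection. You instead decompose $M$ into type I factors first and prove order-indecomposability of a factor's effect algebra via the atom triangle $p,q,r$; your reduction and reassembly ($U_{e_k}x\in\{0,e_k\}$, $x=z$, $y=z^\perp$) are correct, and the restriction of the product to each $[0,e_k]$ is legitimate. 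What you buy is avoiding the spectral computation; what you pay is that the triangle step rests on three structural facts you assert rather than prove: that $p\vee q$ has rank $2$ for distinct atoms (this needs the parallelogram law for projections, or a case-by-case check through the classification), that $U_{p\vee q}F$ is again a factor, and that a rank-$2$ type I factor is a spin factor with a third atom whose join with each of $p,q$ is $p\vee q$. All three are true and standard, but as you yourself flag, this is exactly where the substance of your version lies, and it makes the proof depend on the classification and comparison theory of type I factors where the paper's is self-contained.

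The genuine gap is in the final bijection assertion. For decompositions of types $(iv)$ and $(v)$ your argument does pin down the components, but for types $(ii)$ and $(iii)$ you only obtain $S\cap[0,e]=[0,z]$ (resp.\ $C_1\cap[0,e]=[0,z]$), and "tracing the cycle" does not by itself yield $S=U_zM$ or $C_1=(U_zM)_+$: the components $S$ and $C_1$ are merely subposets containing $0$, and nothing in the definition forces them to be closed under scaling, so a priori $C_1$ could differ from $(U_zM)_+$ outside the order unit ball. The paper closes this by rerunning the $(iv)\Rightarrow(v)$ analysis on $[0,ne]$, using elements of spectrum $\{0,n\}$ in place of projections, to get $[0,ne]\cong[0,nz_n]\times[0,nz_n^\perp]$, then showing $z_n=z$ and exhausting $C_1=\bigcup_{n\ge1}[0,nz]=(U_zM)_+$ and $S=\bigcup_{n\ge1}[-nz,nz]=U_zM$. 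Some argument of this kind is required; without it the "moreover" clause of the proposition remains unproved.
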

 
\begin{proof} 
$(i) \Rightarrow (ii)$: Follows from the order on an algebra direct sum being defined coordinatewise. \\
$(ii) \Rightarrow (iii)$: Take $C_1 := \{ s \in S \colon s \geq 0\}$ and $C_2 := \{t \in T \colon t \geq 0\}$. \\
$(iii) \Rightarrow (iv)$: Let $e \sim (x,y)$ (see Definition~\ref{d:internal_order_product}) for some $x \in C_1$ and $y \in C_2$. Then 
$$
[0,e] \sim [(0,0),(x,y)] = [0,x] \times [0,y].
$$ 
$(iv) \Rightarrow (v)$: Suppose $[0,e]=[0,x]\times[0,y]$ for some $x,y\in M$. We first show that $x$ is a projection. If $u \sim (a,b) \in [0,x] \times [0,y]$ is an atom and $a$ and $b$ are non-zero, then $(a,0),(0,b) \in [0,u]$ are incomparable, contradicting the fact that $u$ is an atom, and so either $u \leq x$ or $u \leq y$. Since $0\le x\le e$, it follows that $\sigma(x)\subseteq[0,1]$. Suppose that there is a $\lambda\in\sigma(x)$ such that $0<\lambda<1$. Let $\eps > 0$ be such that $0 < \lambda-\eps < \lambda+\eps < 1$. Let $p := \mathbf{1}_{(\lambda - \eps, \lambda + \eps)}(x)$, which is non-zero by the spectral theorem (\cite[Theorem~IX.2.2(b)]{conway}), and let $v \leq p$ be an atom. Then $0 < (\lambda - \eps)v \leq (\lambda - \eps)p < x$, so $(\lambda - \eps)v$ is a common lower bound of both $v$ and $x$, therefore $v \not\leq y$ and so $v \leq x$. But then
\[
v = v \wedge x \leq v \wedge ((\lambda + \eps)p + p^\perp) = (\lambda + \eps)v < v,
\]
which is a contradiction. Hence $\sigma(x)\subseteq\{0,1\}$ and it follows that $x$ must be a projection. Similarly, we find that $y$ is a projection. Next, we will show that $x$ and $y$ are orthogonally complemented.
In $[0,x] \times [0,y]$, the only element $z$ such that $x \wedge z = 0$ and $x \vee z = e$ is $y$. By Lemma~\ref{l:proj_laticce}, $x^\perp$ satisfies these properties, so $y = x^\perp$.
%Suppose $z\in[0,e]$ is such that $z\le x,x^\perp$. Then $z\in U_xM\cap U_{x^\perp}M$ by \cite[Lemma~1.39]{AS}, but since we have $U_xM\cap U_{x^\perp}M=\{0\}$ by \cite[Proposition~1.38]{AS} and \cite[Proposition~2.9]{AS}, it follows that $x\land x^\perp=0$ in $[0,e]$. This implies that $x^\perp\le y$. Equivalently $-x$ and $-x^\perp$ have least upper bound $0$ in $[-e,0]$ and so 
%\[
%e=e+(-x)\lor(-x^\perp)=(e-x)\lor(e-x^\perp)=x^\perp\lor x
%\]
%in $[0,e]$. We conclude that $x\lor y=e=x \lor x^\perp$, so $y=x^\perp$.

It remains to show that $x$ is a central projection, and for this it suffices to show that $x$ operator commutes with any projection. If $u$ is an atom, then $[0,u]$ is totally ordered, hence $u \leq x$ or $u \leq y = x^\perp$. In both cases $x$ operator commutes with $u$. A standard argument now shows that $x$ operator commutes with any projection, which we will give for convenience of the reader. To that end, let $p\in M$ be a projection. A standard application of Zorn's lemma shows that there is a maximal set of pairwise orthogonal atoms $\{p_i\colon i\in I\}$ that are dominated by $p$. The net $(p_F)_{F\in\mathcal{F}}$ directed by the finite subsets $\mathcal{F}\subseteq I$ with $p_F:=\sum_{p_i\in F}p_i$ for $F\in\mathcal{F}$ is increasing and converges $\sigma$-strongly to a projection $q=\sup_F p_F\le p$ by \cite[Proposition~2.25]{AS}. Suppose that there is an atom $r\le p-q$. Then $r$ and $q$ are orthogonal and so $r$ is orthogonal to all atoms $p_i$, contradicting the maximality of the set $\{p_i\colon i\in I\}$. Hence $q=p$. By separate $\sigma$-strong continuity of the multiplication, it follows that $x$ operator commutes with $p$, proving that $x$ is a central projection. 

$(v) \Rightarrow (i)$: Central projections correspond to algebra direct sum decompositions.

Regarding the bijection between the different decompositions, we show that if we start with any of the types and pass through the chain of implications, we return to the original decomposition. Direct verification shows that the chain $(i) \Rightarrow (ii) \Rightarrow (iii) \Rightarrow (iv) \Rightarrow (v) \Rightarrow (i)$ leads back to the original decomposition $M \cong M_1 \oplus M_2$, and similarly for the case where we start with $(v)$. If we start with a decomposition as in $(iv)$, so $[0,e] \cong [0,x] \times [0,y]$, then our above proof shows that $x$ is a central projection with orthogonal complement $y$, so these decompositions are in direct bijection with the decompositions in $(v)$.

Suppose we start with a decomposition of type $(ii)$, so $M \cong S \times T$ as an internal order product. Going through our entire proof of $(ii) \Rightarrow (v)$, we show that in this case the order interval $[0,e]$ can be decomposed as $[0, z] \times [0, z^\perp]$ with $z$ a central projection in $S$ and $z^\perp \in T$. In a similar way, for $n \geq 2$, using elements with spectrum $\{0,n\}$ ($n$ times a projection) instead of projections, it can be shown that the order interval $[0,ne]$ can be decomposed as $[0, nz_n] \times [0, nz_n^\perp]$ with $z_n$ a central projection in $S$ and $z_n^\perp \in T$. Since $e \sim (z, z^\perp) \in [0,ne]$, so $(0,0) \leq (z, z^\perp) \leq (nz_n, nz_n^\perp)$, it follows that $z$ is a projection in $U_{z_n} M$ and therefore $z \leq z_n$. Similarly, $z^\perp \leq z_n^\perp$, and so $z = z_n$. Hence $[0, ne] \cong [0, nz] \times [0, n z^\perp]$ for all $n \geq 1$.

For $n \geq 1$, we now consider $[-ne, ne]$. Adding and subtracting $ne \sim (nz, nz^\perp)$ is an order isomorphism, so it preserves order products, and therefore 
\[
[-ne,ne] \cong [0, 2ne] \cong [0, 2nz] \times [0, 2nz^\perp] \cong [-nz, nz] \times [-nz^\perp, nz^\perp] \subseteq S \times T.
\]
Now if $x \in M$, then $x \in [-ne, ne]$ for some $n \geq 1$, and so $x \sim (a,b)$ with $a \in [-nz, nz] \subseteq S$ and $b \in [-nz^\perp, nz^\perp] \subseteq T$. This shows that $S = \cup_{n \geq 1} [-nz,nz]$ and $T = \cup_{n \geq 1} [-nz^\perp,nz^\perp]$, and so $S = U_z M$, $T = U_{z^\perp} M$, and $M \cong U_z M \times U_{z^\perp} M$ is an internal order product, which is precisely what we get if we follow the chain $ (ii) \Rightarrow (iii) \Rightarrow (iv) \Rightarrow (v) \Rightarrow (i) \Rightarrow (ii)$.

If we start with a decomposition of type $(iii)$, then we just use the above argument without the need to consider negative elements. This shows that all types of decompositions are in bijection with each other.
%Since any atom in $M$ is either dominated by $x$ or orthogonal to $x$, it follows that $x$ operator commutes with all atoms in $M$ by \cite[Proposition~2.18]{AS} and \cite[Proposition~2.26]{AS}. Hence $x$ must therefore also operator commute with $p_F$ for all $F\in\mathcal{F}$ and again since the Jordan multiplication is $\sigma$-weakly continuous,  $x$ operator commutes with $p$. We conclude that $x$ operator commutes with all elements in $M$ by \cite[Lemma~4.2.5]{OS}, proving that $x$ is a central projection.
%If $M_+=C_1\times C_2$ as partially ordered sets, then for $e=(x,y)$ we find that $[0,e]=[0,x]\times[0,y]$ as partially ordered sets. By $(ii)$ we have that $x$ and $y$ are orthogonally complemented central projections. Hence $M=U_xM\oplus U_yM$ and $M_+=U_xM_+\oplus U_yM_+$. Let $z_1\in C_1\setminus\{0\}$. Then $z_1\land z_2=0$ for all $z_2\in C_2$, so $z_1\land y=0$ and so $\|z_1\|^{-1}z_1\in[0,x]$. Thus we have $z_1=U_x\|z_1\|x_1$ for some $x_1\in [0,x]$ and therefore $C_1\subseteq U_xM_+$. Conversely, if $z_1=U_x x_1$ for some $x_1\in M_+$, then $\lambda x_1\land x_2=0$ for all $x_2\in[0,y]$. So, if $z_2\in C_2\setminus\{0\}$, then $\|z_2\|^{-1}(z_1\land z_2)=\|z_2\|^{-1}z_1\land\|z_2\|^{-1}z_2=0$ and hence $z_1\land z_2=0$. We find that $z_1\in C_1$, so $C_1=U_xM_+$. Similarly, it follows that $C_2=U_yM_+$, which proves statement $(ii)$.
\end{proof}

We immediately obtain an analogue of this proposition for arbitrary decompositions.

\begin{corollary}\label{c:arbitrary interval decomposition}
Let $M$ be an atomic JBW-algebra and $I$ an index set. Then the following are equivalent:
\begin{enumerate}
\item $M =\bigoplus_{i \in I} M_i$ as a direct sum of JBW-subalgebras;
\item $M \cong \prod_{i \in I} S_i$ as an internal order product;
\item $M_+ \cong \prod_{i \in I} C_i$ as an internal order product;
\item $[0,e] \cong \prod_{i \in I} [0,x_i]$ as an internal order product for some $x_i \in M$;
\item $[0,e] \cong \prod_{i \in I} [0,z_i]$ as an internal order product for some central projections $z_i \in M$ with supremum $e$.
\end{enumerate}
Moreover, all these types of decompositions are in bijection with each other.
%\begin{itemize}
%\item[$(i)$] If $[0,e]=\prod_{i\in I}[0,x_i]$ as a partially ordered set, then $(x_i)_{i\in I}$ consists of pairwise orthogonal central projections with least upper bound $e$ in $[0,e]$.
%\item[$(ii)$]If $M_+=\prod_{i\in I}C_i$ as a partially ordered set, then there are pairwise orthogonal central projections $(z_i)_{i\in I}$ with least upper bound $e$ in $[0,e]$ such that $C_i=U_{z_i}M_+$ for all $i\in I$.
%\end{itemize}
\end{corollary}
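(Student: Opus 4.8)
The plan is to run the cyclic chain $(i)\Rightarrow(ii)\Rightarrow(iii)\Rightarrow(iv)\Rightarrow(v)\Rightarrow(i)$, reducing the two genuinely new implications $(iv)\Rightarrow(v)$ and $(v)\Rightarrow(i)$ to the two-factor case already settled in Proposition~\ref{p:interval decomposition} by grouping the index set $I$ into two blocks $\{i\}$ and $I\setminus\{i\}$. The implications $(i)\Rightarrow(ii)$, $(ii)\Rightarrow(iii)$, $(iii)\Rightarrow(iv)$ carry over verbatim from the proof of Proposition~\ref{p:interval decomposition}: the order on a JBW-algebra direct sum is coordinatewise, and passing to positive cones and then to the order interval below $e\sim(x_i)_{i\in I}$ preserves internal order products because the witnessing identifications $x_i\sim\overline{x_i}$ are inherited.

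For $(iv)\Rightarrow(v)$ I would fix $i\in I$ and regroup $[0,e]\cong[0,x_i]\times\prod_{j\neq i}[0,x_j]$. By Remark~\ref{r:sup_in_internal_order_product}, applied inside $\prod_{j\in I}[0,x_j]$ where $0$ is the least element, the element $(0,(x_j)_{j\neq i})$ equals $\bigvee_{j\neq i}\overline{x_j}$ and is therefore identified with $y_i:=\bigvee_{j\neq i}x_j\in[0,e]$; this shows the second factor is the order interval $[0,y_i]$ and that the regrouped decomposition is again an internal order product, now with two factors. Proposition~\ref{p:interval decomposition}, via its implication $(iv)\Rightarrow(v)$ together with the bijection statement, then gives that $x_i$ is a central projection with $x_i^\perp=y_i$. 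As $i$ was arbitrary, every $x_i$ is a central projection, and $e\sim(x_i)_{i\in I}=\bigvee_{i\in I}\overline{x_i}$ forces $\bigvee_{i\in I}x_i=e$ by Remark~\ref{r:sup_in_internal_order_product} again; taking $z_i:=x_i$ proves $(v)$.

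For $(v)\Rightarrow(i)$, regrouping as above and applying Proposition~\ref{p:interval decomposition} yields $z_i^\perp=\bigvee_{j\neq i}z_j$, hence $z_j\le z_i^\perp$ and $z_i\circ z_j=0$ for $i\neq j$; alternatively this orthogonality follows from the disjointness of the supporting coordinates of $\overline{z_i}$ and $\overline{z_j}$ together with Lemma~\ref{l:proj_laticce}. Thus $\{z_i\}_{i\in I}$ is a pairwise orthogonal family of central projections with $\bigvee_{i\in I}z_i=e$, and the standard theory of JBW-algebras---extending \cite[Proposition~2.41]{AS} from a single central projection to such a family---identifies $M$ with $\bigoplus_{i\in I}U_{z_i}M$, a direct sum of JBW-subalgebras, which is $(i)$.

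The asserted bijection between the five kinds of decompositions is obtained exactly as in Proposition~\ref{p:interval decomposition}: following the chain starting from $(i)$, $(iii)$, $(iv)$ or $(v)$ returns the original data by inspection, and for a decomposition of type $(ii)$ one reuses the two-block argument, applying the bijection part of Proposition~\ref{p:interval decomposition} to the $i$-th block against the rest to identify $S_i$ with $U_{z_i}M$. The step I expect to need the most care is $(iv)\Rightarrow(v)$: one must check that regrouping an arbitrary internal order product into two blocks produces an internal order product in the precise sense of Definition~\ref{d:internal_order_product}---in particular that $\bigvee_{j\neq i}x_j$ genuinely exists in $[0,e]$ and is identified with the correct product element---before Proposition~\ref{p:interval decomposition} can legitimately be invoked.
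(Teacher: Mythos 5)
Your proposal is correct and follows essentially the same route as the paper: the paper also reduces $(iv)\Rightarrow(v)$ to the two-factor case by regrouping $[0,e]\cong[0,x_i]\times[0,\sup\{x_j\colon j\neq i\}]$ via Remark~\ref{r:sup_in_internal_order_product} and then invoking Proposition~\ref{p:interval decomposition}, with the remaining implications and the bijection statement carried over verbatim. Your extra care about why the second block is genuinely the order interval $[0,\bigvee_{j\neq i}x_j]$, and your explicit derivation of pairwise orthogonality of the $z_i$ in $(v)\Rightarrow(i)$, are details the paper leaves implicit but are entirely consistent with its argument.
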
 

\begin{proof}
Except for $(iv) \Rightarrow (v)$, the proof of this corollary is the same as the proof of Proposition~\ref{p:interval decomposition}. \\
$(iv) \Rightarrow (v)$: By Remark~\ref{r:sup_in_internal_order_product}, $[0,e] \cong [0,x_i] \times [0, \sup \{x_j \colon j \not= i\}]$, and so it follows from Proposition~\ref{p:interval decomposition} that $x_i$ is a central projection. As before, Remark~\ref{r:sup_in_internal_order_product} implies that $e = \sup_i x_i$.
%Suppose $[0,e]=\prod_{i\in I}[0,x_i]$ as a partially ordered set. Let $\mathcal{F}$ denote the collection of finite subsets of $I$ which is directed by set inclusion and define the increasing net $(x_F)_{F\in\mathcal{F}}$ in $[0,e]$ by 
%\[
%x_F(i):=\begin{cases}x_i&\mbox{if $i\in F$;}\\\\ 0&\mbox{if $i\notin F$.}\end{cases}
%\]
%Then for any $F\in\mathcal{F}$ it follows that $[0,e]=\prod_{i\in F}[0,x_i]\times\left[0,\prod_{i\notin F}x_i\right]$ and  Proposition~\ref{p:interval decomposition} implies that $\{x_i\colon i\in F\}$ consists of pairwise orthogonal central projections. Since $M$ is monotone complete, the net $(x_F)_{F\in\mathcal{F}}$ converges $\sigma$-weakly to its least upper bound $x\in [0,e]$. Moreover, as 
%\[
%e=\sup_{i\in I}x_i=\sup_{i\in I}x_{\{i\}}\le\sup_{F\in\mathcal{F}}x_F = x \leq e 
%\]
%we conclude that $x=e$.
%If $M_+=\prod_{i\in I}C_i$ as a partially ordered set, then for $e=(x_i)_{i\in I}$ we have that $[0,e]=\prod_{i\in I}[0,x_i]$ and by statement $(i)$ we have that $\{x_i\colon i\in I\}$ consists of pairwise orthogonal central projections with least upper bound $e$ that yield the direct sum decomposition $M=\bigoplus_{i\in I}U_{z_i}M$. The argument showing that $C_i=U_{z_i}M_+$ for all $i\in I$ is analogous to the proof for Proposition~\ref{p:cone decomposition}$(ii)$. This proves statement $(ii)$. 
\end{proof}

Since order isomorphisms preserve order products, the following lemma now follows immediately from Corollary~\ref{c:arbitrary interval decomposition}.

\begin{lemma}\label{l:order isom decomp}
Let $M$ and $N$ be atomic JBW-algebras, let $\{z_i\}_{i \in I} \in M$ be a collection of orthogonal central projections with supremum $e_M$ and let $f \colon [0,e_M] \to [0,e_N]$ be an order isomorphism. Then $f$ can be decomposed as $f = \prod_{i\in I} f_i$ where $f_i\colon [0,z_i] \to [0,f(z_i)]$ are the corresponding restrictions which are order isomorphisms for each $i\in I$, and $\{f(z_i)\}_{i \in I}$ are orthogonal central projections with supremum $e_N$.
\end{lemma}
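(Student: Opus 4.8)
The plan is to deduce Lemma~\ref{l:order isom decomp} directly from Corollary~\ref{c:arbitrary interval decomposition} together with the principle that order isomorphisms transport internal order products to internal order products. First I would observe that the hypothesis gives us, by the implication $(v) \Rightarrow (ii)$ of Corollary~\ref{c:arbitrary interval decomposition} (or rather by Remark~\ref{r:sup_in_internal_order_product} applied to the collection $\{z_i\}_{i \in I}$ of orthogonal central projections with supremum $e_M$), an internal order product decomposition $[0,e_M] \cong \prod_{i \in I}[0,z_i]$, where the identification sends $x \in [0,e_M]$ to $(U_{z_i}x)_{i \in I}$ and $\ol{x_i}$ corresponds to $x_i \in [0,z_i] = U_{z_i}M \cap [0,e_M]$.

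Next I would push this decomposition forward through $f$. Since $f \colon [0,e_M] \to [0,e_N]$ is an order isomorphism, the composite isomorphism $[0,e_N] \to [0,e_M] \cong \prod_{i \in I}[0,z_i] \to \prod_{i \in I}[0,f(z_i)]$ (the last map being $\prod_i f_i$ where $f_i := f|_{[0,z_i]} \colon [0,z_i] \to [0,f(z_i)]$, which is an order isomorphism onto its image because $f$ is) exhibits $[0,e_N]$ as an internal order product $\prod_{i \in I}[0,f(z_i)]$: one checks that the distinguished elements match, i.e. $f(x_i) \sim \ol{f(x_i)}$ for $x_i \in [0,z_i]$, which is immediate from the fact that $\ol{x_i}$ is sent to $\ol{f(x_i)}$ under $\prod_i f_i$ and from $x_i \sim \ol{x_i}$. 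This is a decomposition of type $(iv)$ in Corollary~\ref{c:arbitrary interval decomposition} for the algebra $N$, with the $f(z_i)$ playing the role of the $x_i$.

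Applying $(iv) \Rightarrow (v)$ of Corollary~\ref{c:arbitrary interval decomposition} then yields that each $f(z_i)$ is a central projection in $N$ and that $\sup_{i \in I} f(z_i) = e_N$; orthogonality of the $f(z_i)$ follows since distinct central projections appearing in such a decomposition are automatically orthogonal (they are orthogonally complemented within the relevant sub-products, or directly: $f(z_i) \wedge f(z_j) \sim \ol{z_i} \wedge \ol{z_j} = 0$ for $i \ne j$). Finally, the statement $f = \prod_{i \in I} f_i$ is just the observation that under the two identifications $[0,e_M] \cong \prod_i [0,z_i]$ and $[0,e_N] \cong \prod_i [0,f(z_i)]$, the map $f$ becomes the coordinatewise map $(x_i)_i \mapsto (f_i(x_i))_i$; concretely, for $x \in [0,e_M]$ we have $f(x) = f(\bigvee_i U_{z_i}x) = \bigvee_i f(U_{z_i}x)$ since $f$ preserves the suprema that exist (by Lemma~\ref{l:proj_laticce}-style reasoning, or simply because $f$ is an order isomorphism and hence preserves all suprema), which is exactly $\bigvee_i f_i(U_{z_i}x)$, the image of $(f_i(U_{z_i}x))_i$ under the second identification.

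The only genuinely non-routine point is the compatibility of the distinguished-element structure under $f$, i.e. verifying that the forward image of an internal order product is again an internal order product in the precise sense of Definition~\ref{d:internal_order_product}; once that bookkeeping is in place, everything else is a direct citation of Corollary~\ref{c:arbitrary interval decomposition} and of the remark that order isomorphisms preserve suprema. I do not expect any substantive obstacle here — the lemma is essentially a restatement of the corollary in the language of a fixed order isomorphism, and the proof in the excerpt indeed says only ``follows immediately from Corollary~\ref{c:arbitrary interval decomposition}''.
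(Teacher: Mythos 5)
Your proposal is correct and follows exactly the paper's (one-line) argument: the paper proves this lemma solely by remarking that order isomorphisms preserve order products and citing Corollary~\ref{c:arbitrary interval decomposition}, which is precisely the chain $(v)\Rightarrow(iv)$ pushed forward through $f$ and then $(iv)\Rightarrow(v)$ that you spell out. The bookkeeping you add (that $f([0,z_i])=[0,f(z_i)]$, that the distinguished elements match, and that $f$ preserves the relevant suprema) is the intended, routine content behind the paper's ``follows immediately.''
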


Note that the central atoms of $M$ yield totally ordered parts in the decomposition of the effect algebra and in fact, these are the only ones.

\begin{lemma}\label{l:totally ordered}
Let $M$ be an atomic JBW-algebra and suppose that $[0,e]=[0,z_1]\times[0,z_2]$. Then the order interval $[0,z_1]$ is totally ordered if and only if $z_1$ is a central atom.
\end{lemma}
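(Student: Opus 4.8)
The plan is to deduce the statement by combining two facts that are already at our disposal. From the implication $(iv)\Rightarrow(v)$ in Proposition~\ref{p:interval decomposition}, any internal order product decomposition $[0,e]=[0,z_1]\times[0,z_2]$ forces $z_1$ (and likewise $z_2$) to be a central projection of $M$, with $z_2=z_1^\perp$. On the other hand, as recorded in the preliminaries on the basis of \cite[Proposition~2.32, Lemma~3.29]{AS}, a non-zero projection $p\in M$ is an atom if and only if the order interval $[0,p]$ is totally ordered. Since centrality of $z_1$ is automatic, the equivalence to be proven therefore reduces to the equivalence ``$z_1$ is an atom of $\mathcal{P}(M)$ $\iff$ $[0,z_1]$ is totally ordered'' for the projection $z_1$.

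Concretely, I would argue as follows. We may assume $z_1\neq 0$, since otherwise the decomposition is the trivial one. If $z_1$ is a central atom, then in particular it is a non-zero atom of $\mathcal{P}(M)$, so $[0,z_1]$ is totally ordered by the cited characterization of atoms. Conversely, if $[0,z_1]$ is totally ordered, then the same characterization shows that $z_1$ is an atom of $\mathcal{P}(M)$; together with the centrality of $z_1$ furnished by Proposition~\ref{p:interval decomposition}, this exhibits $z_1$ as a central atom.

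There is no real obstacle here beyond bookkeeping. The two points requiring a little care are: invoking Proposition~\ref{p:interval decomposition} so as to know that $z_1$ is central --- this is the only place where the hypothesis that $M$ is atomic enters, through the chain of equivalences in that proposition --- and applying the phrase ``totally ordered'' to $[0,z_1]$ under the standing assumption $z_1\neq 0$, so that it genuinely matches the notion of atom (otherwise the degenerate case $z_1=0$ would have to be excluded by hand). Everything else is a direct appeal to results already proved or cited.
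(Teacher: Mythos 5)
Your proposal is correct and follows essentially the same route as the paper: both invoke Proposition~\ref{p:interval decomposition} to see that $z_1$ is automatically a central projection, and then reduce to the equivalence between being an atom and having a totally ordered order interval (the paper reproves the ``totally ordered $\Rightarrow$ atom'' direction in one line by splitting $z_1=p+q$ into incomparable orthogonal projections, whereas you cite the characterisation already recorded in the preliminaries --- a cosmetic difference). Your explicit handling of the degenerate case $z_1=0$ is a small point of care the paper leaves implicit.
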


\begin{proof}
If $z_1$ is a central atom, then by definition the order interval $[0,z_1]$ is totally ordered and order isomorphic to the unit interval $[0,1]$. Conversely, if $[0,z_1]$ is totally ordered, then $z_1$ is a central projection by Proposition~\ref{p:interval decomposition}. But if $z_1 = p + q$ for some non-trivial orthogonal projections $p$ and $q$, then $p$ and $q$ would be two incomparable elements of $[0,z_1]$. Hence $z_1$ must be an atom.
\end{proof}

%Given an order isomorphism $f\colon [0,e_M]\to[0,e_N]$ between the unit order intervals of atomic JBW-algebras, the decomposition $[0,e_M]=\prod_{i\in I}[0,e_{M_i}]$ of the unit order interval is transferred to the unit order interval $[0,e_N]$ by $f$. To see this, define for each $i\in I$ the partially ordered sets 
%\[
%E_i:=\bigl\{f(x_i)\colon x_i\in [0,e_{M_i}]\bigr\}
%\]
%and denote by $f_i$ the restriction of $f$ which maps $[0,e_{M_i}]$ onto $E_i$. Note that $f_i$ is an order isomorphism for all $i\in I$ and that we obtain an order isomorphism $g\colon\prod_{i\in I}E_i\to [0,e_N]$ via 
%\[
%g\bigl((f_i(x_i))_{i\in I}\bigr):=\sup_{i\in I}f_i(x_i).
%\] 
%Hence $g((f_i(x_i))_{i\in I})=\sup_{i\in I}f_i(x_i)=f(\sup_{i\in I}x_i)=f(x)$, so we can identify the unit order interval $[0,e_N]$ with the decomposition $\prod_{i\in I}[0,f(e_{M_i})]$ and $f=\prod_{i\in I}f_i$ for the order isomorphisms $f_i\colon [0,e_{M_i}]\to [0,f(e_{M_i})]$. For future reference, we will summarise this argument in the following lemma.

\section{Order isomorphisms between effect algebras}\label{s:order_isomorphisms}

The conditions for an order isomorphism $f\colon [0,e_M]\to [0,e_N]$ to map the invertible part $(0,e_M]$ onto the invertible part $(0,e_N]$, as well as a complete description for such $f$ are given in this section.
It turns out that the invariance of the invertible parts of the effect algebras is related to the dimension of the centre of the atomic JBW-algebra. 

In the following lemma we introduce an important class of order isomorphisms on effect algebras of JB-algebras that are essential for the description of $f$. This result for operators on a real or complex Hilbert space can be found in \cite[Lemma~3.10]{Semrl} and \cite[Lemma~2.1]{D}. For the reader's convenience we include the analogous proof which holds for general unital JB-algebras. 

\begin{lemma}\label{l:p functions}
Let $A$ be a unital JB-algebra. For any $t<1$ the function $\varphi_t\colon [0,e]\to[0,e]$ defined by 
\[
\varphi_t(x):=x\circ(tx+(1-t)e)^{-1}
\] is an order isomorphism and the collection $\{\varphi_t\colon t<1\}$ forms a group under composition which satisfies $\varphi_t\circ\varphi_s=\varphi_{t+s-ts}$. In particular we have $\varphi_t^{-1}=\varphi_{\frac{t}{t-1}}$.
\end{lemma}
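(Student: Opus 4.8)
The plan is to push everything into the associative subalgebra $W:=\mathrm{JB}(x,e)\cong C(\sigma(x))$, which contains $x$, $tx+(1-t)e$ and their Jordan inverses and in which the Jordan product is ordinary pointwise multiplication, so that every identity reduces to an elementary computation with the scalar function $g_t(s):=s/(ts+1-t)$. First I would check that $\varphi_t$ is a well-defined self-map of $[0,e]$: for $x\in[0,e]$ we have $\sigma(x)\subseteq[0,1]$, and since the affine function $s\mapsto ts+(1-t)$ takes the positive values $1-t$ (at $s=0$) and $1$ (at $s=1$), its range over $[0,1]$ lies in $(0,\infty)$; hence $\sigma(tx+(1-t)e)\subseteq(0,\infty)$, so $tx+(1-t)e\in A_+^\circ$ and $\varphi_t(x)$ is defined. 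Inside $W\cong C(\sigma(x))$ one has $\varphi_t(x)=g_t(x)$, and because $g_t$ is continuous on $[0,1]$ with $g_t(0)=0$, $g_t(1)=1$ and $g_t'(s)=(1-t)/(ts+1-t)^2>0$, it restricts to an increasing bijection of $[0,1]$; by the functional calculus $\varphi_t$ therefore maps $[0,e]$ into $[0,e]$.

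Next I would establish the composition law. Writing $d_r:=rx+(1-r)e$, which lies in $A_+^\circ$ (hence is invertible in $W$) for every $r<1$ by the same reasoning as above, a short manipulation in the commutative algebra $W$ gives, with $r:=t+s-ts$ (so that $t+(1-t)s=r$ and $(1-t)(1-s)=1-r$),
\[
t\varphi_s(x)+(1-t)e=\big(tx+(1-t)d_s\big)d_s^{-1}=d_rd_s^{-1},
\]
whence $\varphi_t(\varphi_s(x))=\varphi_s(x)\,\big(t\varphi_s(x)+(1-t)e\big)^{-1}=(xd_s^{-1})(d_sd_r^{-1})=xd_r^{-1}=\varphi_r(x)$, i.e.\ $\varphi_t\circ\varphi_s=\varphi_{t+s-ts}$. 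Since $1-(t+s-ts)=(1-t)(1-s)>0$ whenever $t,s<1$, the family is closed under composition; composition of maps is associative; $\varphi_0=\mathrm{id}$ because $d_0=e$; and solving $t+s-ts=0$ yields $s=t/(t-1)$, which satisfies $t/(t-1)<1$ (equivalently $t>t-1$, true since $t-1<0$). As the operation $(t,s)\mapsto t+s-ts$ is symmetric, $\varphi_t\circ\varphi_{t/(t-1)}=\varphi_{t/(t-1)}\circ\varphi_t=\varphi_0=\mathrm{id}$, so $\{\varphi_t:t<1\}$ is a group with $\varphi_t^{-1}=\varphi_{t/(t-1)}$; in particular each $\varphi_t$ is a bijection of $[0,e]$.

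It remains to see that each $\varphi_t$ is order preserving, since then its inverse $\varphi_{t/(t-1)}$, being another member of the family, is order preserving too, and $\varphi_t$ is an order isomorphism. For $t\neq0$ the scalar identity $g_t(s)=\tfrac1t\big(1-(1-t)(ts+1-t)^{-1}\big)$ lifts to
\[
\varphi_t(x)=\tfrac1t\,e-\tfrac{1-t}{t}\big(tx+(1-t)e\big)^{-1},
\]
exhibiting $\varphi_t$ as the composite of the affine map $x\mapsto tx+(1-t)e$ (into $A_+^\circ$), the order-reversing inversion on $A_+^\circ$ (which follows from \eqref{eq:quad properties}), multiplication by the scalar $-\tfrac{1-t}{t}$, and translation by $\tfrac1t e$. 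For $0<t<1$ the affine map is order preserving and $-\tfrac{1-t}{t}<0$; for $t<0$ it is order reversing and $-\tfrac{1-t}{t}>0$; in either case the signs combine to make the composite order preserving, and $\varphi_0=\mathrm{id}$ trivially is. Hence every $\varphi_t$ is an order isomorphism of $[0,e]$.

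The one point that needs care is bookkeeping: one must verify that every element occurring in the composition law genuinely lies in the single commutative subalgebra $W=\mathrm{JB}(x,e)$—so that the Jordan operations really do collapse to pointwise multiplication and inversion in $C(\sigma(x))$—and one must keep track of the sign of $t$ in the last step, since $\varphi_t$ turns out to be order preserving for every $t<1$ even though the intermediate affine maps are monotone in opposite directions according to $\mathrm{sgn}(t)$.
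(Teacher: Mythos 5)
Your proposal is correct and follows essentially the same route as the paper: the order-preservation argument via the rewriting $\varphi_t(x)=\tfrac1t e-\tfrac{1-t}{t}\bigl(tx+(1-t)e\bigr)^{-1}$ together with the order-reversal of inversion on $A_+^\circ$, and the composition law by direct computation, are exactly the paper's proof (which splits the sign cases $0<t<1$ and $t<0$ into two separate rewritings rather than tracking signs in one formula, a cosmetic difference). Your explicit reduction to the associative subalgebra $\mathrm{JB}(x,e)\cong C(\sigma(x))$ merely makes precise a step the paper leaves implicit; the only nitpick is that the order-reversal of inversion is \cite[Lemma~1.31]{AS} rather than a consequence of \eqref{eq:quad properties}.
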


\begin{proof}
The statement trivially holds for $t=0$. So, suppose $t\neq 0$. If $0<t<1$, then we can rewrite $\varphi_t$, by using the identity $y\circ(y+\lambda e)^{-1}=e-\lambda(y+\lambda e)^{-1}$ which holds for any $y\in A_+$ and $\lambda>0$, to be 
\[
\varphi_t(x)={\textstyle \frac{1}{t}}e-{\textstyle \frac{1-t}{t^2}}\left(x+({\textstyle \frac{1}{t}}-1)e\right)^{-1}.
\]
Since we have that $x\le y$ if and only if $y^{-1}\le x^{-1}$ for all $x,y\in A_+^\circ$ by \cite[Lemma~1.31]{AS}, it follows immediately from the alternative description of $\varphi_t$ that it is an order isomorphism. Similarly, if $t<0$, then we can rewrite $\varphi_t$ to be 
\[
\varphi_t(x)={\textstyle \frac{1}{t}}e+{\textstyle \frac{1-t}{t^2}}\left((1-{\textstyle \frac{1}{t}})e-x\right)^{-1}
\]
and it is verified analogously that it is an order isomorphism in this case as well. 

If $t,s<1$, then for $x\in [0,e]$ it follows that

\begin{align*}
\varphi_t(\varphi_s(x))&=\varphi_t(x\circ(sx+(1-s)e)^{-1})\\&=x\circ(sx+(1-s)e)^{-1}\circ\bigl(t(x\circ(sx+(1-s)e))^{-1})+(1-t)e\bigr)^{-1}\\&=x\circ\bigl(tx+s(1-t)x+(1-s)(1-t)e\bigr)^{-1}=x\circ\bigl((t+s-ts)x+(1-(t+s-ts)e)\bigr)^{-1}\\
&=\varphi_{t+s-ts}(x),
\end{align*}
so $\varphi_t\circ\varphi_s=\varphi_{t+s-ts}$. Since $\varphi_0$ is the identity function, we infer from the composition identity that the inverse of $\varphi_t$ satisfies $\varphi_t^{-1}=\varphi_{\frac{t}{t-1}}$. 
%The norm continuity of these functions follows from the fact that quadratic representations are norm continuous by \cite[Theorem~1.25]{AS}, and the identity
%\begin{align*}
%(x+\lambda e)^{-1}-(x_n+\lambda e)^{-1}&= U_{(x+\lambda e)^{-1/2}}(e+U_{(x+\lambda e)^{1/2}}(x_n+\lambda e)^{-1})\\&= U_{(x+\lambda e)^{-1/2}}(e+U_{(x+\lambda e)^{-1/2}}(x_n+\lambda e))
%\end{align*}
%which follows from \eqref{eq:quad properties} and holds for any $\lambda>0$.
\end{proof}

\subsection{Characterising the order isomorphisms}

In unital JB-algebras the order isomorphisms on effect algebras $[0,e]$ are related to order isomorphisms on cones, where the invertible elements $(0,e]$ of this effect algebra play an important role. If $A$ and $B$ are unital JB-algebras and $f\colon [0,e_A]\to [0,e_B]$ is an order isomorphism such that $f$ leaves the invertible elements invariant, that is, the restriction $f|_{(0,e_A]}$ is an order isomorphism onto $(0,e_B]$, then this restriction yields an order isomorphism between the cones $A_+$ and $B_+$. A key observation is that $(0,e_A]$ and $A_+$ are order anti-isomorphic via the map $(0,e_A]\ni x\mapsto x^{-1}-e_A\in A_+$. It then follows that 
\begin{align}\label{e:order isom from open interval}
x\mapsto f|_{(0,e_A]}((x+e_A)^{-1})^{-1}-e_B
\end{align}
is an order isomorphism from $A_+$ onto $B_+$. Here we use the fact that $x\le y$ if and only if $y^{-1}\le x^{-1}$ for all $x,y\in A_+^\circ$ by \cite[Lemma~1.31]{AS}. The order isomorphisms between cones of atomic JBW-algebras are well understood and have been characterised in \cite[Theorem~3.8]{IR}, which we can exploit to find a formula for $f|_{(0,e_A]}$. 

\begin{proposition}\label{p:order isom on interior}
Let $M$ and $N$ be atomic JBW-algebras and suppose that $M$ does not contain any central atoms. If $f \colon (0,e_M] \to (0,e_N]$ is an order isomorphism, then 
\[
f(x)=\left(U_yJx^{-1}-y^2+e_N\right)^{-1}\qquad\bigl(x\in(0,e_M]\bigr)
\] 
for some $y\in N_+^\circ$ and a Jordan isomorphism $J \colon M \to N$.
\end{proposition}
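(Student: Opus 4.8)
The plan is to transport the problem from the invertible part of the effect algebra to the positive cone, apply the classification of order isomorphisms between positive cones of atomic JBW-algebras from \cite[Theorem~3.8]{IR}, and then translate the resulting formula back. As recalled just before the statement, $(0,e_M]$ and $M_+$ are order anti-isomorphic via $x\mapsto x^{-1}-e_M$ (with inverse $w\mapsto (w+e_M)^{-1}$), and likewise for $N$; composing these anti-isomorphisms with $f$ produces, as in \eqref{e:order isom from open interval}, a map
\[
g\colon M_+\to N_+,\qquad g(w):=f\bigl((w+e_M)^{-1}\bigr)^{-1}-e_N .
\]
First I would check that $g$ is a well-defined order isomorphism: for $w\in M_+$ one has $w+e_M\ge e_M$, so $(w+e_M)^{-1}\in(0,e_M]$ and $f$ may be applied, while $f((w+e_M)^{-1})\in(0,e_N]$ is invertible with inverse $\ge e_N$, so $g(w)\in N_+$; being the composite of two order anti-isomorphisms and one order isomorphism, $g$ is then automatically an order isomorphism, using $a\le b\iff b^{-1}\le a^{-1}$ on $M_+^\circ$ and $N_+^\circ$ (\cite[Lemma~1.31]{AS}).

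Next I would apply \cite[Theorem~3.8]{IR} to $g$. Since $M$ is atomic without central atoms (and consequently so is $N$: a cone order isomorphism respects the internal order product decomposition of Corollary~\ref{c:arbitrary interval decomposition}, under which the totally ordered direct-summand cones are precisely those arising from central atoms, cf. Lemma~\ref{l:totally ordered}), that classification gives $y\in N_+^\circ$ and a Jordan isomorphism $J\colon M\to N$ with
\[
g(w)=U_yJw\qquad (w\in M_+).
\]
Finally I would unwind this. Given $x\in(0,e_M]$, set $w:=x^{-1}-e_M\in M_+$, so that $x=(w+e_M)^{-1}$ and hence $g(w)=f(x)^{-1}-e_N$. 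Equating the two expressions for $g(w)$ and using that $J$ is linear and unital ($Je_M=e_N$), that $U_y$ is linear, and that $U_ye_N=y^2$ by the last identity in \eqref{eq:quad properties}, we obtain
\[
f(x)^{-1}=U_yJ(x^{-1}-e_M)+e_N=U_yJx^{-1}-U_ye_N+e_N=U_yJx^{-1}-y^2+e_N ,
\]
and inverting both sides gives the claimed formula; note the right-hand side is automatically invertible, being equal to the invertible element $f(x)^{-1}$.

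Once \cite[Theorem~3.8]{IR} is available this is essentially a bookkeeping exercise, so there is no genuine obstacle. The two points that need care are confirming the hypotheses of that theorem -- in particular that the absence of central atoms in $M$ is exactly what rules out wild increasing bijections of $\R_+$ on rank-one summands, leaving only the linear form $U_yJ$ -- and correctly tracking the two inversions together with the translation by $e_N$ when moving between the cone picture and the effect-algebra picture.
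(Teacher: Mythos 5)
Your proposal is correct and follows essentially the same route as the paper: conjugate $f$ by the order anti-isomorphisms $x\mapsto x^{-1}-e$ to get a cone order isomorphism, apply \cite[Theorem~3.8]{IR} (the hypothesis of no central atoms being exactly the no-disengaged-atoms condition there), and unwind using $U_ye_N=y^2$. Your version just spells out the bookkeeping slightly more explicitly than the paper does.
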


\begin{proof}
Let $f \colon (0,e_M] \to (0,e_N]$ be an order isomorphism. Then the map $\hat{f} \colon M_+ \to N_+$ defined by 
\[
\hat{f}(x):=f((x+e_M)^{-1})^{-1}-e_N
\]
is an order isomorphism. By \cite[Theorem~3.8]{IR}, since $M$ does not contain any disengaged atoms (the disengaged atoms coincide with the central atoms), there is a Jordan isomorphism $J \colon M \to N$ and a $y\in N_+^\circ$ such that $\hat{f}(x) = U_y Jx$ for all $x\in M_+$. Rewriting this yields 
$f((x+e_M)^{-1})=(U_yJx+e_N)^{-1}$ for all $x\in M_+$, so we conclude that $f(x)=(U_yJx^{-1}-y^2+e_N)^{-1}$ for all $x\in (0,e_M]$ as required. 
\end{proof}

\begin{corollary}
Let $A$ be a unital JB-algebra. Suppose that for every $y \in A_+^\circ$ there exists an order automorphism $f_y$ of $[0,e]$ such that
\[
f_y(x)=\left(U_y x^{-1}-y^2+e \right)^{-1}
\]
for all $x \in (0, e]$, then for all $z \in (0,e)$, the map $f_{(z^{-1}-e)^{-1/2}}$ maps $z$ to $\frac{1}{2}e$ and $f_{(z^{-1}-e)^{1/2}}$ maps $\frac{1}{2}e$ to $z$. In particular, the order automorphism group of $[0,e]$ acts transitively on $(0,e)$.
\end{corollary}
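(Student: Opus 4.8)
The plan is a direct computation with the quadratic representation, using only the identities \eqref{eq:quad properties} and \eqref{e:op com quad rep}; no structure theory is needed. First I would fix $z \in (0,e)$ (read as $z \in A_+^\circ$ with $e - z \in A_+^\circ$, which is exactly what is needed for $(z^{-1}-e)^{\pm 1/2}$ to make sense in $A_+^\circ$) and set $w := z^{-1} - e$. Since $z$ and $e - z$ are invertible, $w \in A_+^\circ$, so $w^{1/2}, w^{-1/2} \in A_+^\circ$ and the hypothesis applies with $y = w^{\pm 1/2}$. All elements in play --- $w$, $w^{\pm 1/2}$, $w^{-1}$, $e$ --- lie in the associative subalgebra $\mathrm{JB}(w,e) \cong C(\sigma(w))$, so \eqref{e:op com quad rep} applies; taking $f(t) = t^{-1/2}$, $g(t) = t$ there gives $U_{w^{-1/2}} w = [f^2 g](w) = e$ (since $f(t)^2 g(t) \equiv 1$), while $U_{w^{1/2}} e = (w^{1/2})^2 = w$ and $U_{w^{-1/2}} e = (w^{-1/2})^2 = w^{-1}$ by the last identity of \eqref{eq:quad properties}.

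Next I would substitute into the formula. With $y := (z^{-1}-e)^{-1/2} = w^{-1/2}$ and $x = z$, using $z^{-1} = w + e$, linearity of $U_{w^{-1/2}}$ and the identities above,
\[
U_y z^{-1} = U_{w^{-1/2}} w + U_{w^{-1/2}} e = e + w^{-1}, \qquad U_y z^{-1} - y^2 + e = (e + w^{-1}) - w^{-1} + e = 2e,
\]
so $f_y(z) = (2e)^{-1} = \tfrac12 e$. Symmetrically, with $y := (z^{-1}-e)^{1/2} = w^{1/2}$ and $x = \tfrac12 e$, we have $(\tfrac12 e)^{-1} = 2e$, hence $U_y x^{-1} = 2 U_{w^{1/2}} e = 2w$ and $U_y x^{-1} - y^2 + e = 2w - w + e = w + e = z^{-1}$, giving $f_y(\tfrac12 e) = z$. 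Both $z$ and $\tfrac12 e$ lie in $(0,e) \subseteq (0,e]$, so the formula for $f_y$, which is only assumed on $(0,e]$, is legitimately applied.

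Finally, transitivity follows at once: given $z_1, z_2 \in (0,e)$, the composite $g := f_{(z_2^{-1}-e)^{1/2}} \circ f_{(z_1^{-1}-e)^{-1/2}}$ is a composition of order automorphisms of $[0,e]$, hence an order automorphism of $[0,e]$, and it sends $z_1 \mapsto \tfrac12 e \mapsto z_2$. I do not expect a genuine obstacle; the only things to watch are that every step takes place inside the abelian subalgebra $\mathrm{JB}(w,e)$, where $U_{w^{\pm 1/2}}$ is just multiplication by $w^{\pm 1}$ and \eqref{e:op com quad rep} applies verbatim, and the harmless bookkeeping that $(0,e)$ must be interpreted so that the square roots $(z^{-1}-e)^{\pm 1/2}$ exist in $A_+^\circ$.
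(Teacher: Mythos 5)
Your computation is correct and is exactly the direct verification the paper intends (the corollary is stated there without proof, as an immediate consequence of substituting $y=(z^{-1}-e)^{\mp 1/2}$ into the formula). All steps — the reduction to the associative subalgebra $\mathrm{JB}(w,e)$, the identities $U_{w^{-1/2}}w=e$, $U_{w^{\pm 1/2}}e=w^{\pm 1}$, and the composition argument for transitivity — check out.
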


The following lemma shows that $(0,e]$ is order dense in $[0,e]$.

\begin{lemma}\label{l:inf dense}
Let $A$ be a JB-algebra and $x\in [0,e]$. Then there exists a monotone decreasing sequence $(x_n)_{n\ge 1}$ in $(0,e]$ with infimum $x$.
\end{lemma}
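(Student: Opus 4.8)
The plan is to write down the approximating sequence explicitly rather than to invoke any abstract density argument. Given $x \in [0,e]$, I would set
\[
x_n := \Bigl(1 - \tfrac1n\Bigr)x + \tfrac1n e = x + \tfrac1n(e - x), \qquad n \ge 1,
\]
and then check three things: that each $x_n$ lies in $(0,e]$, that the sequence $(x_n)_{n\ge1}$ is monotone decreasing, and that its infimum equals $x$.

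For membership in $(0,e]$: since $0 \le x \le e$ we have $\sigma(x) \subseteq [0,1]$, so by the continuous functional calculus (passing to the associative subalgebra $\mathrm{JB}(x,e) \cong C(\sigma(x))$) one gets $\sigma(x_n) = \{(1-\tfrac1n)\lambda + \tfrac1n : \lambda \in \sigma(x)\} \subseteq [\tfrac1n, 1]$. The lower bound $\tfrac1n > 0$ shows $x_n \in A_+^\circ$ is invertible, and the upper bound $1$ gives $e - x_n \ge 0$, i.e.\ $x_n \le e$; hence $x_n \in (0,e]$. Monotonicity is a one-line computation: $x_n - x_{n+1} = \bigl(\tfrac1n - \tfrac1{n+1}\bigr)(e-x) = \tfrac{1}{n(n+1)}(e-x) \ge 0$, because $e - x \ge 0$.

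It remains to identify the infimum. Since $x_n - x = \tfrac1n(e-x) \ge 0$, the element $x$ is a lower bound of $\{x_n : n \ge 1\}$. Conversely, if $y \le x_n$ for every $n$, then $x_n - y \in A_+$ for all $n$; since $\|x_n - x\| = \tfrac1n\|e - x\| \to 0$, the sequence $x_n - y$ converges in norm to $x - y$, and as $A_+$ is norm-closed this forces $x - y \in A_+$, i.e.\ $y \le x$. Hence $x = \inf_{n} x_n$. I do not expect any genuine obstacle here: the only non-bookkeeping inputs are the spectral mapping property of the continuous functional calculus (used to locate $\sigma(x_n)$) and the norm-closedness of the cone $A_+$ (used to pass to the infimum), both of which are standard.
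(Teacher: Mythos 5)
Your proof is correct and follows essentially the same strategy as the paper's: produce an explicit monotone decreasing sequence in $(0,e]$ converging to $x$ in norm, and then use norm-closedness of $A_+$ to show that $x$ is the greatest lower bound. The only difference is cosmetic --- the paper truncates the spectrum via $f_n(t)=\max(t,\tfrac1n)$ and the continuous functional calculus, whereas you use the convex combination $(1-\tfrac1n)x+\tfrac1n e$, which is defined without any functional calculus and is invertible since $x_n\ge\tfrac1n e$.
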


\begin{proof}
For $n\in\N$, define the continuous functions $f_n$ on $[0,1]$ by
\[
f_n(t):=
\begin{cases}
t & \mbox{if }t\in [\frac{1}{n},1], \\
\frac{1}{n} & \mbox{if }t\in [0,\frac{1}{n}).
\end{cases}
\]
By the continuous functional calculus $x_n := f_n(x)$ defines a monotone decreasing sequence that converges to $x$ in norm and has $x$ as a lower bound. Suppose $y$ is another lower bound for all $x_n$. Then $x-y  = \lim_n x_n - y \geq 0$, showing that $x$ is the greatest lower bound of $(x_n)_{n\ge 1}$.
\end{proof}

\begin{theorem}\label{t:order isom on without central atoms}
Let $M$ and $N$ be atomic JBW-algebras and suppose that $M$ does not contain any central atoms. Then $f \colon [0,e_M] \to [0,e_N]$ is an order isomorphism such that $f((0,e_M])=(0,e_N]$ if and only if $f$ is of the form
\[
f(x)=\varphi_t\left(U_{(z^2+e_N)^{1/2}}(e_N-(e_N+U_{z^{-1}}Jx)^{-1})\right)\qquad\bigl(x\in [0,e_M]\bigr)
\]
for some $t<1$, $z\in N_+^\circ$, and a Jordan isomorphism $J \colon M \to N$.
\end{theorem}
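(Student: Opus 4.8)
The plan is to establish both implications by reducing to Proposition~\ref{p:order isom on interior}, which already handles order isomorphisms on the invertible parts $(0,e_M] \to (0,e_N]$. First I would treat the easier direction: given $f$ of the stated form, I need to check that it is a well-defined order isomorphism $[0,e_M]\to[0,e_N]$ carrying $(0,e_M]$ onto $(0,e_N]$. Reading the formula from the inside out: $x\mapsto U_{z^{-1}}Jx$ is an order isomorphism from $[0,e_M]$ onto $[0,U_{z^{-1}}e_N]=[0,z^{-2}]$ (a Jordan isomorphism followed by a quadratic representation of an invertible element, using \eqref{eq:quad properties}), the map $w\mapsto e_N-(e_N+w)^{-1}$ is an order isomorphism from $N_+$ onto $[0,e_N)$ by \cite[Lemma~1.31]{AS} (and it sends the interior of $N_+$ to $(0,e_N)$), then $U_{(z^2+e_N)^{1/2}}$ is an order automorphism of $N$ which, I must verify, sends $[0,e_N)$ onto $[0,e_N)$ — this is where the specific scalar $(z^2+e_N)^{1/2}$ matters, since $U_{(z^2+e_N)^{1/2}}z^{-2} = (z^2+e_N)\circ z^{-2}\circ(z^2+e_N)$ needs to work out to $e_N$ via \eqref{e:op com quad rep}; and finally $\varphi_t$ is an order automorphism of $[0,e_N]$ by Lemma~\ref{l:p functions}. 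Composing, $f$ is an order isomorphism $[0,e_M]\to[0,e_N]$, and one tracks through that the invertible elements $(0,e_M]$ — equivalently $x$ with $r(x)=e_M$ — map to $(0,e_N]$, since each intermediate map preserves the corresponding "interior/full-range" notion.

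For the forward direction, suppose $f\colon[0,e_M]\to[0,e_N]$ is an order isomorphism with $f((0,e_M])=(0,e_N]$. Restricting gives an order isomorphism $f|_{(0,e_M]}\colon(0,e_M]\to(0,e_N]$, to which Proposition~\ref{p:order isom on interior} applies (using that $M$ has no central atoms): there are $y\in N_+^\circ$ and a Jordan isomorphism $J\colon M\to N$ with
\[
f(x) = \left(U_y Jx^{-1} - y^2 + e_N\right)^{-1}\qquad(x\in(0,e_M]).
\]
Now I would algebraically rearrange this closed form into the claimed shape on $(0,e_M]$. Writing $z := y^{-1}$, one has $U_y Jx^{-1} = (U_{z^{-1}}Jx)^{-1}$ by the inversion identities in \eqref{eq:quad properties}, so $f(x) = ((U_{z^{-1}}Jx)^{-1} + (e_N - z^{-2}))^{-1}$. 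The task is to match this with $\varphi_t(U_{(z^2+e_N)^{1/2}}(e_N-(e_N+U_{z^{-1}}Jx)^{-1}))$ for a suitable $t<1$. This is a computation entirely inside the associative (hence $C(K)$-type) JBW-algebra generated by the single element $U_{z^{-1}}Jx$ together with $z$ — but those need not commute, so really one fixes $x$ and works in the subalgebra generated by $w:=U_{z^{-1}}Jx$ and $e_N$, using \eqref{e:op com quad rep} to evaluate $U_{(z^2+e_N)^{1/2}}$ against $z$-functions, and Lemma~\ref{l:p functions} to absorb the residual affine-fractional discrepancy into a single parameter $t$. I expect the identity $\varphi_t(u)=\frac1t e - \frac{1-t}{t^2}(u+(\frac1t-1)e)^{-1}$ from the proof of Lemma~\ref{l:p functions} to be the right normal form to compare against, and the parameter $t$ to be determined by the "$+e_N$" shifts so that it is genuinely $<1$.

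The last step is to pass from $(0,e_M]$ to all of $[0,e_M]$. Here I would invoke Lemma~\ref{l:inf dense}: every $x\in[0,e_M]$ is the infimum of a decreasing sequence $(x_n)$ in $(0,e_M]$. Since $f$ is an order isomorphism it preserves infima, so $f(x)=\inf_n f(x_n)$, and since the right-hand side of the claimed formula is built from maps ($J$, $U_{z^{-1}}$, $w\mapsto e_N-(e_N+w)^{-1}$, $U_{(z^2+e_N)^{1/2}}$, $\varphi_t$) each of which is an order isomorphism between order intervals and hence also preserves infima, the formula extends by continuity of these operations along the decreasing sequence. Thus $f$ agrees with the stated expression on all of $[0,e_M]$. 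The main obstacle I anticipate is the middle computation: correctly identifying the scalar shift $(z^2+e_N)^{1/2}$ and the parameter $t<1$ that convert the Drnov\v sek–type formula from Proposition~\ref{p:order isom on interior} into the displayed composition — in particular verifying that $U_{(z^2+e_N)^{1/2}}$ really does preserve $[0,e_N)$ and that the bookkeeping of the two "$+e_N$" translations leaves a legitimate $\varphi_t$ with $t<1$ rather than some map outside the group $\{\varphi_t : t<1\}$.
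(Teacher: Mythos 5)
Your overall architecture matches the paper's: restrict to $(0,e_M]$, invoke Proposition~\ref{p:order isom on interior}, massage the resulting formula into the stated form, and then extend to $[0,e_M]$ via Lemma~\ref{l:inf dense} using that order isomorphisms preserve infima of decreasing sequences (that last step, and the converse direction by composing order isomorphisms and tracking invertibility, are both fine in outline). However, the heart of the theorem is precisely the ``middle computation'' you defer, and the one concrete step you do propose for it is wrong. You set $z:=y^{-1}$ and claim $U_yJx^{-1}=(U_{z^{-1}}Jx)^{-1}$; but by \eqref{eq:quad properties}, $(U_{z^{-1}}Jx)^{-1}=U_zJx^{-1}=U_{y^{-1}}Jx^{-1}\neq U_yJx^{-1}$ in general. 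The identity that does hold is $U_yJx^{-1}=(U_{y^{-1}}Jx)^{-1}$, which would force $z=y$ -- and then the remaining discrepancy between $\bigl((U_{y^{-1}}Jx)^{-1}-y^2+e_N\bigr)^{-1}$ and $\varphi_t\bigl(U_{(y^2+e_N)^{1/2}}(e_N-(e_N+U_{y^{-1}}Jx)^{-1})\bigr)$ is \emph{not} of the form $\varphi_t$ for any scalar $t$, because it involves the non-scalar element $y^2$. So the hope that ``Lemma~\ref{l:p functions} absorbs the residual affine-fractional discrepancy into a single parameter $t$'' fails for your choice of $z$.

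The correct resolution is that $z$ is not a simple function of $y$ alone: one must first pick a scalar $\lambda>1$ with $\lambda e_N-y^2\in N_+^\circ$, set $z:=U_{y^{1/2}}(\lambda e_N-y^2)^{-1/2}$ (equivalently $z=y\circ(\lambda e_N-y^2)^{-1/2}$, so that $z^2+e_N=\lambda(\lambda e_N-y^2)^{-1}$) and $t:=1-\lambda<0$. With these choices one verifies $f(x)^{-1}+(\lambda-1)e_N=U_{(\lambda e_N-y^2)^{1/2}}(e_N+U_zJx^{-1})$ and hence $\lambda\bigl(f(x)^{-1}+(\lambda-1)e_N\bigr)^{-1}=U_{(z^2+e_N)^{1/2}}\bigl(e_N-(e_N+U_{z^{-1}}Jx)^{-1}\bigr)$, while the left-hand side equals $\varphi_{(\lambda-1)/\lambda}(f(x))$; inverting $\varphi_{(\lambda-1)/\lambda}$ gives the claimed formula. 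Without identifying this $(z,t)$ pair the forward implication is not established. A smaller but real slip in your converse direction: $U_{(z^2+e_N)^{1/2}}$ does not map $[0,e_N)$ onto $[0,e_N)$; what one needs is that the image of $[0,e_M]$ under $x\mapsto e_N-(e_N+U_{z^{-1}}Jx)^{-1}$ is $[0,(z^2+e_N)^{-1}]$, and $U_{(z^2+e_N)^{1/2}}(z^2+e_N)^{-1}=e_N$ (not $U_{(z^2+e_N)^{1/2}}z^{-2}=e_N$, which is false), so that this order interval is carried onto $[0,e_N]$.
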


\begin{proof}
Suppose that $f\colon [0,e_M]\to[0,e_N]$ is an order isomorphism such that $f((0,e_M])=(0,e_N]$. It follows from Proposition~\ref{p:order isom on interior} that the restriction of $f$ to $(0,e_M]$ is of the form 

\[
f(x)=(U_yJx^{-1}-y^2+e_N)^{-1}
\]
for some $y\in N_+^\circ$ and a Jordan isomorphism $J\colon M\to N$. Let $\lambda>1$ be such that $\lambda e_N-y^2$ is positive and invertible. Then for $x\in (0,e_M]$, it follows from \eqref{e:op com quad rep} and \eqref{eq:quad properties} that 

\[
U_{(\lambda e_N-y^2)^{-1/2}}U_y=U_{y^{1/2}}U_{(\lambda e_N-y^2)^{-1/2}}U_{y^{1/2}}=U_{{U_{y^{1/2}}}(\lambda e_N-y^2)^{-1/2}},
\]
and so for $z:={U_{y^{1/2}}}(\lambda e_N-y^2)^{-1/2}$ we have 
\[
f(x)^{-1}+(\lambda-1)e_N= U_yJx^{-1}-y^2+\lambda e_N=U_{(\lambda e_N-y^2)^{1/2}}\left(e_N+U_{z} Jx^{-1}\right).
\]
Using the functional calculus on the JB-algebra generated by $y$ and $e_N$ again, we find that 

\[
z=y\circ(\lambda e_N-y^2)^{-1/2}=(y^2\circ(\lambda e_N-y^2)^{-1})^{1/2}=(\lambda(\lambda e_N-y^2)^{-1}-e_N)^{1/2}
\] 
hence $z^2+e_N=\lambda(\lambda e_N-y^2)^{-1}$ and so $\lambda(z^2+e_N)^{-1}=\lambda e_N-y^2$. Therefore,
\begin{align*}
U_{(\lambda e_N-y^2)^{1/2}}\left(e_N+U_{z} Jx^{-1}\right)&=U_{(\lambda(z^2+e_N)^{-1})^{1/2}}\left(e_N+U_zJx^{-1}\right)\\&=U_{\lambda^{1/2}(z^2+e_N)^{-1/2}}\left(e_N+(U_{z^{-1}}Jx)^{-1}\right)\\&=\lambda U_{(z^2+e_N)^{-1/2}}(e_N+(U_{z^{-1}}Jx)^{-1})
\end{align*}
and by using \eqref{eq:quad properties} together with the identity $(e_N+(U_{z^{-1}}Jx)^{-1})^{-1}=e_N-(e_N+U_{z^{-1}}Jx)^{-1}$ to obtain the second equality, we find that 
\begin{align}\label{eq:extend f 1}
\lambda\left(f(x)^{-1}+(\lambda-1)e_N\right)^{-1}&= U_{(z^2+e_N)^{1/2}}(e_N+(U_{z^{-1}}Jx)^{-1})^{-1} \nonumber \\&=U_{(z^2+e_N)^{1/2}}(e_N-(e_N+U_{z^{-1}}Jx)^{-1}).
\end{align}
Note that the identity $\left(e_N+((\lambda-1)f(x))^{-1}\right)^{-1}=e_N-(e_N+(\lambda-1)f(x))^{-1}$ now yields
\begin{align}\label{eq:extend f 2}
\lambda\left(f(x)^{-1}+(\lambda-1)e_N\right)^{-1}&={\textstyle \frac{\lambda}{\lambda-1}}\left(e_N+((\lambda-1)f(x))^{-1}\right)^{-1}={\textstyle \frac{\lambda}{\lambda-1}}\left(e_N-(e_N+(\lambda-1)f(x))^{-1}\right)\nonumber\\&=
{\textstyle \frac{\lambda}{\lambda-1}}e_N-{\textstyle \frac{\lambda}{(\lambda-1)^2}}\left({\textstyle \frac{1}{\lambda-1}}e_N+f(x)\right)^{-1}\nonumber\\&=\varphi_s(f(x))
\end{align}
for $s:=\frac{\lambda-1}{\lambda}\in(0,1)$. Since $\varphi_s^{-1}=\varphi_t$ with $t=\frac{s}{s-1}=1-\lambda\in(-\infty,0)$, we combine \eqref{eq:extend f 1} and \eqref{eq:extend f 2} to write $f(x)$ as
\begin{align}\label{e:expression for f}
f(x)=\varphi_t(\varphi_s(f(x))=\varphi_t\left(U_{(z^2+e_N)^{1/2}}(e_N-(e_N+U_{z^{-1}}Jx)^{-1})\right).
\end{align}
The expression for $f$ in \eqref{e:expression for f} is a well defined order isomorphism between the entire order intervals $[0,e_M]$ and $[0,e_N]$. We will show that the only order isomorphism $g\colon[0,e_M]\to [0,e_N]$ for which the restriction $g|_{(0,e]}$ coincides with $f$ is of the form \eqref{e:expression for f}, proving the required description for $f$. Indeed, if $g\colon [0,e_M]\to [0,e_N]$ is an order isomorphism such that $g|_{(0,e_M]}=f$, and $x\in [0,e_M]$, then by Lemma~\ref{l:inf dense} there is a monotone decreasing sequence $(x_n)_{n\ge 1}$ in $(0,e_M]$ with infimum $x$. Hence
\[
f(x)=f\left(\inf_{n\ge 1}x_n\right)=\inf_{n\ge 1}f(x_n)=\inf_{n\ge 1}g(x_n)=g\left(\inf_{n\ge 1}x_n\right)=g(x)
\]
as $f$ and $g$ are order isomorphisms. We conclude that $f$ must be of the form \eqref{e:expression for f}.

Conversely, suppose that $f$ is of the form \eqref{e:expression for f} for some $t<1$, $z\in N_+^\circ$, and a Jordan isomorphism $J\colon M\to N$. Since $\varphi_s(\eps e_N)=\frac{\eps}{s(\eps-1)+1}e_N\in(0,e_N]$ for all $s<1$ and all $\eps>0$, it follows that $\varphi_s((0,e_N])=(0,e_N]$ for all $s<1$. By \eqref{eq:quad properties} the quadratic representation $U_{(z^2+e_N)^{1/2}}$ maps invertible elements to invertible elements. Lastly, for $x\in[0,e_M]$ it follows from the identity 
\[
e_N-(e_N+U_{z^{-1}}Jx)^{-1}=(U_{z^{-1}}Jx)\circ(e_N+U_{z^{-1}}Jx)^{-1}
\]
in conjunction with the Shirsov-Cohn theorem that $e_N-(e_N+U_{z^{-1}}Jx)^{-1}$ is invertible if and only if $U_{z^{-1}}Jx$ is invertible. This in turn, is equivalent to saying that $x$ is invertible. Hence $f(x)\in(0,e_N]$ if and only if $x\in(0,e_M]$, or equivalently, $f((0,e_M])=(0,e_N]$.
\end{proof}

\subsection{Invariance of $(0,e]$}

In this section we characterise for which atomic JBW-algebras all order isomorphisms satisfy $f((0,e_M])=(0,e_N]$. For $B(H)$ it was shown by \v{S}emrl \cite{Semrl} that every order isomorphism of $[0,e]$ leaves the invertible part $(0,e]$ invariant. For general atomic JBW-algebras this is no longer the case. For example, on the effect algebra $[0,e]\subseteq \ell^\infty$ we construct an order isomorphism $f\colon [0,e]\to [0,e]$ as follows. For $n \geq 1$, let $f_n\colon [0,1]\to[0,1]$ be an order isomorphism such that $f_n(1/2)=2^{-n}$. Then $f(\lambda_n)_{n\ge 1}:=(f_n(\lambda_n))_{n\ge 1}$ is an order isomorphism that maps $\frac{1}{2}e$ to $(2^{-n})_{n\ge 1} \notin (0,e]$. However, it turns out that all order isomorphisms between effect algebras of JBW-factors of type I do leave the invertible parts invariant.

Before proceeding, we will introduce some terminology and notation. For a JBW-algebra $M$, we denote by $\P_1(M)$ the set of atoms of $M$. For an atom $p$ in a JBW-algebra $M$ and $x\in M_+$, we say that $x$ \emph{dominates the atom} $p$ if there exists a $\lambda>0$ such that $\lambda p\le x$. 

\begin{proposition}\label{p:(0,e] invariant for factors}
Let $M$ and $N$ be JBW-factors of type $I$ and let $f\colon [0,e_M]\to[0,e_N]$ be an order isomorphism. Then $f(0,e_M]=(0,e_N]$. 
\end{proposition}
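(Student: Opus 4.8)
The goal is to show that an order isomorphism $f\colon[0,e_M]\to[0,e_N]$ between effect algebras of type I JBW-factors preserves invertibility. The natural strategy is to characterise the invertible elements of $[0,e]$ purely in order-theoretic terms, so that the property is automatically preserved by $f$. In a type I factor every non-zero projection dominates an atom, and an element $x\in[0,e]$ is invertible (i.e.\ lies in $(0,e]$) precisely when its range projection is $e$, which by atomicity should be equivalent to saying that $x$ dominates \emph{every} atom $p\in\P_1(M)$, in the sense of the domination relation just defined ($\lambda p\le x$ for some $\lambda>0$). The first step is therefore to prove this equivalence: if $x$ is invertible then $x\ge\mu e$ for some $\mu>0$ so it dominates all atoms; conversely, if $x$ is not invertible then its range projection $r(x)$ is a proper projection, $r(x)^\perp\ne 0$, and by atomicity $r(x)^\perp$ dominates some atom $p$, which is then orthogonal to $x$ and hence not dominated by $x$ (if $\lambda p\le x$ then $p\le r(x)$, contradiction).

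The second, and main, step is to show that the domination relation between atoms and positive elements of $[0,e]$ can be detected order-theoretically, so that $f$ carries it to the corresponding relation in $N$. The key point is to give an order characterisation of the atoms themselves: by the discussion in the preliminaries, a non-zero $p\in[0,e]$ is (up to the totally-ordered parts, which are absent in a factor) an atom iff $[0,p]$ is totally ordered; more usefully, atoms are the minimal non-zero elements among those $q$ with $[0,q]$ totally ordered, or one can characterise them as the extreme points of a suitable order-theoretic structure. Then I would characterise ``$x$ dominates the atom $p$'' as: there exists $q\in[0,e]$ with $[0,q]$ totally ordered, $q\ne 0$, and $q\wedge x\ne 0$ — or more carefully, that the ``component of $x$ in the direction of $p$'' is non-zero, which can be phrased via infima and the order interval $[0,x]$ without reference to the algebraic structure. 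Since $f$ is an order isomorphism it preserves totally ordered order intervals, non-zero-ness, infima that exist, and hence preserves both the set of atoms and the domination relation. Combining with step one: $x\in(0,e_M]$ iff $x$ dominates every atom of $M$ iff $f(x)$ dominates every atom of $N$ (using that $f$ induces a bijection on atoms) iff $f(x)\in(0,e_N]$.

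\textbf{Expected main obstacle.} The delicate part is the order-theoretic characterisation of the domination relation ``$\lambda p\le x$ for some $\lambda>0$'' in a way that is manifestly preserved by order isomorphisms. One cannot directly say ``$\lambda p\le x$'' since $\lambda p$ refers to scalar multiplication, which is not order-theoretic; instead one wants something like: $p$ is dominated by $x$ iff $\inf\{q\in\P_1\text{-like elements}\colon q\ge \text{something}\}$ behaves correctly, or iff the order interval $[0,x]$ contains a non-zero element of the totally ordered segment determined by $p$. Making this precise — and checking that in a type I factor the relevant infima and suprema exist in $[0,e]$ (which they do for projections by Lemma~\ref{l:proj_laticce}, but here we need it for more general elements, or we reduce to projections via range projections) — is where the real work lies. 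A clean route is: $x$ dominates $p$ iff $r(x)\ge p$ iff $p\le\bigvee\{r(y)\colon 0\le y\le x\}$ (a supremum of projections, which exists in $[0,e]$ by Lemma~\ref{l:proj_laticce}), and then identify range projections order-theoretically as suprema of atoms below the relevant interval. One must also invoke that $f$ maps atoms to atoms bijectively, which follows from Lemma~\ref{l:totally ordered} together with the fact that a factor has no central atoms, so the only totally ordered $[0,q]$ with $q$ a projection are $q$ an atom — this needs the observation that $f$ preserves ``$[0,q]$ is totally ordered'' and preserves projections, the latter being the subtle point handled exactly as in the proof of $(iv)\Rightarrow(v)$ in Proposition~\ref{p:interval decomposition} (an element $q$ with $[0,q]$ totally ordered and $q$ ``behaving like a projection'' order-theoretically, e.g.\ being an extreme point of $[0,e]$ or satisfying $q\wedge q^\perp$-type relations).
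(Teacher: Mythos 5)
Your overall skeleton matches the paper's: show that $f$ maps (intervals below) atoms to (intervals below) atoms, observe that the relation ``$x$ dominates the atom $p$'' is order-theoretic, and characterise $(0,e]$ as the set of elements dominating every atom. However, the part you flag as the ``main obstacle'' is actually the easy part: for an atom $p$ one has $[0,p]=\{tp\colon t\in[0,1]\}$, so ``$x$ dominates $p$'' is simply ``$x$ and $p$ admit a non-zero common lower bound'', and $f$ carries $[0,p]$ onto $[0,q]$ for some atom $q$ because the sets $[0,p]$ with $p\in\P_1(M)$ are exactly the maximal totally ordered subsets $S$ of $[0,e_M]$ satisfying $[0,x]\subseteq S$ for all $x\in S$ (no separate argument that $f$ ``preserves projections'' is needed, nor any appeal to infima of non-projections).

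The genuine gap is in your first step. The claim that $x\in[0,e]$ is invertible precisely when $r(x)=e$ is false in infinite rank: in $B(\ell^2)_{sa}$ the element $x=\mathrm{diag}(1,\tfrac12,\tfrac13,\dots)$ is injective with dense range, so $r(x)=e$, yet $x\notin(0,e]$. Consequently your proof of the implication ``$x$ dominates every atom $\Rightarrow$ $x$ invertible'' collapses: when $x$ is not invertible but $r(x)=e$, there is no atom below $r(x)^\perp=0$ to exhibit. Such an $x$ does in fact fail to dominate some atom (any rank-one projection whose range is not contained in the range of $x^{1/2}$), but establishing this requires an operator-theoretic input, and this is precisely where the paper does its real work: finite-rank factors are handled by the spectral decomposition, and infinite-rank factors, which by the classification are $B(H)_{sa}$ for $H$ a real, complex or quaternionic Hilbert space, are handled by Douglas' lemma (with a separate reduction of the quaternionic case to the complex one). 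As written, your argument only yields $f((0,e_M])\subseteq\{y\in[0,e_N]\colon r(y)=e_N\}$, a set strictly larger than $(0,e_N]$ when $N$ has infinite rank, so the proposition does not follow.
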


\begin{proof}
By \cite[Lemma~3.1]{IR}, the rays generated by the atoms correspond exactly to extreme rays of the cone, and so the maximal totally ordered subsets $S$ of $[0,e_M]$ such that $[0,x] \subseteq S$ for all $x \in S$ are precisely of the form $[0,p]$ for $p \in \P_1(M)$. Hence, if $p \in \P_1(M)$, then $f([0,p]) = [0,q]$ for some $q \in \P_1(N)$. In particular, $f$ maps $\P_1(M)$ bijectively onto $\P_1(N)$. 

Now if $x \in (0,e_M]$, then $x$ dominates every atom in $M$ and hence by the above $f(x)$ dominates every atom in $N$. Suppose $N$ is of finite rank. Then the spectral decomposition yields that 
\[
f(x) = \sum_{i=1}^n \lambda_i q_i
\]
for some $\lambda_i \in [0,1]$ and $q_i \in \P_1(N)$. Since $f(x)$ dominates every atom in $N$, all the $\lambda_i$ must be strictly positive, and so $0 \notin \sigma(f(x))$.

It remains to consider the case where $N$ is of infinite rank. Then by \cite[Theorem~7.5.11]{OS} $N$ is of the form $B(H)_{sa}$ where $H$ is a real, complex or quaternion Hilbert space. Assume $H$ is complex. The atoms are precisely the rank one projections, and since $f(x)$ dominates every atom, by Douglas' lemma (\cite[Theorem~1]{Douglas}) it follows that the range of $f(x)$ contains the range of every rank 1 projection. Hence $f(x)$ is surjective, and $f(x)$ has to be injective as well otherwise $f(x)$ would not dominate a rank 1 projection in its kernel. Hence $f(x)$ is invertible.

As explained in \cite{Douglas}, Douglas' lemma is also valid for real Hilbert spaces, so it remains to verify Douglas' lemma for quaternion Hilbert spaces. An inspection of Douglas' proof shows that it only relies on the Closed Graph Theorem, which also holds in quaternion Hilbert spaces by \cite[Theorem~3.9]{ACS} (an earlier but more archaic reference is \cite[Corollaire~5,~p.~I.19]{Bourbaki}). 

An alternative proof that avoids quaternion Hilbert spaces is as follows. As explained in \cite[7.5.5,~7.5.6,~7.5.10,~7.5.11]{OS} one can model the bounded self-adjoint operators on a quaternion Hilbert spaces as follows: let $H$ be a complex Hilbert space and let $j \colon H^2 \to H^2$ be the unit quaternion defined by $j(\xi, \eta) := (-\overline{\eta}, \overline{\xi})$, where $\xi \mapsto \overline{\xi}$ is a conjugation on $H$. Then $N$, the bounded self-adjoint operators on a quaternion Hilbert space, can be represented as the bounded self-adjoint operators on $H^2$ that commute with $j$. For $\xi \in H$, let $P_\xi$ denote the projection onto the span of $\xi$; then one readily verifies that the self-adjoint operator $T := P_\xi \oplus P_{\overline{\xi}}$ commutes with $j$ and hence belongs to $N$. Since $f(x)$ dominates every atom and $T$ has at most rank 2 in $N$, it also dominates $T$, and so it will also dominate $P_\xi \oplus 0$ and $0 \oplus P_{\overline{\xi}}$ for every $\xi \in H$; by taking $\eta := \overline{\xi}$, $T$ also dominates $0 \oplus P_\eta$ for every $\eta \in H$. By Douglas' lemma for complex Hilbert space operators, the range of $f(x)$ contains the ranges of $P_\xi \oplus 0$ and $0 \oplus P_\eta$, and so the range of $f(x)$ contains all vectors $(\xi, \eta) \in H^2$. Hence $f(x)$ is surjective which as before shows that $f(x)$ is invertible.
\end{proof}

Note that a consequence of Proposition~\ref{p:(0,e] invariant for factors} is that any order isomorphism between effect algebras of atomic JBW-algebras that are a finite direct sum of type I factors must preserve the invertible parts as well. In fact, the following lemma shows that this characterises when it happens. 
%Note that if the unit order interval $[0,e_M]$ of an atomic JBW-algebra $M$ admits a finite maximal decomposition 
%\[
%[0,e_M]=\prod_{k=1}^n[0,e_{M_k}], 
%\]
%then $\{e_{M_1},\ldots,e_{M_n}\}$ consists of pairwise orthogonal central projections by Corollary~\ref{c:arbitrary interval decomposition} and for an atomic JBW-algebra $N$ with unit order interval $[0,e_N]$, an order isomorphism $f\colon [0,e_M] \to [0,e_N]$ would yield a maximal decomposition of the unit order interval 
%\[
%[0,e_N]=\prod_{k=1}^n[0,f(e_{M_k})]. 
%\]
%Hence $\{f(e_{M_1}),\ldots,f(e_{M_n})\}$ is a set of pairwise orthogonal central projections and $[0,f(e_{M_k})]$ are unit order intervals of type I factors of $N$. By Proposition~\ref{p:(0,e] invariant for factors} the restrictions of $f$ mapping $[0,e_{M_k}]$ onto $[0,f(e_{M_k})]$ must satisfy $f(0,e_{M_k}]=(0,f(e_{M_k})]$. Consequently, the finite decomposition of $[0,e_M]$ yields that $f(0,e_M]=(0,e_N]$. 

\begin{lemma}\label{l:finite dimensional centre}
Let $M$ and $N$ be atomic JBW-algebras such that $[0, e_M]$ is order isomorphic to $[0, e_N]$. Then any order isomorphism $f\colon [0,e_M]\to [0,e_N]$ satisfies $f(0,e_M]=(0,e_N]$ if and only if the centre $Z(M)$ is finite dimensional.
\end{lemma}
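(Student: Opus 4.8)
The strategy is to combine the structure theory of atomic JBW-algebras with the two extreme cases already handled. Recall that an atomic JBW-algebra $M$ decomposes as a direct sum $M = \bigoplus_{i\in I} M_i$ of type I factors by \cite[Proposition~3.45]{AS}, and that the summands that are central atoms of $M$ correspond exactly to one-dimensional direct summands $\R e$. Write $I = I_0 \cup I_1$ where $I_1$ indexes the central atoms and $I_0$ the factors of rank at least $2$. The centre $Z(M)$ is finite dimensional if and only if $I$ is finite, so I will prove: every order isomorphism preserves the invertible part $\iff$ $I$ is finite.

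\textbf{The easy direction (finite $\Rightarrow$ invariance).} Suppose $I$ is finite. Let $f\colon[0,e_M]\to[0,e_N]$ be an order isomorphism. By Proposition~\ref{p:interval decomposition} applied to the central atoms of $M$, or more directly by Lemma~\ref{l:order isom decomp}, $f$ decomposes as a finite product $f = \prod_{i\in I} f_i$ of order isomorphisms $f_i\colon[0,z_i]\to[0,f(z_i)]$ between effect algebras of type I JBW-factors; moreover $f$ sends central atoms to central atoms (a summand $[0,z_i]$ is totally ordered iff $z_i$ is a central atom, by Lemma~\ref{l:totally ordered}, and order isomorphisms preserve this). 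On each factor, Proposition~\ref{p:(0,e] invariant for factors} gives $f_i((0,z_i])=(0,f(z_i)]$. Since $I$ is finite, an element $x\sim(x_i)_{i\in I}$ is invertible in $M$ iff each $x_i$ is invertible in the $i$-th summand, and likewise in $N$; hence $f(x)$ is invertible iff $x$ is. This uses that for a \emph{finite} direct sum the coordinatewise notion of invertibility agrees with invertibility in the whole algebra — the point that fails for infinite sums.

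\textbf{The hard direction (infinite $\Rightarrow$ a bad order isomorphism exists).} Suppose $I$ is infinite; I must construct an order isomorphism $f\colon[0,e_M]\to[0,e_M]$ with $f((0,e_M])\neq(0,e_M]$. The idea is the $\ell^\infty$ example in miniature: pick a countable subset $\{i_1,i_2,\dots\}\subseteq I$, and on each summand $M_{i_k}$ define an order automorphism $g_k$ of $[0,z_{i_k}]$ that pushes some fixed invertible element $\tfrac12 z_{i_k}$ toward the boundary at a geometric rate — concretely, using Theorem~\ref{t:order isom on without central atoms} (if $M_{i_k}$ has rank $\geq 2$) or an explicit $\varphi_t$-type map together with a scalar (if $M_{i_k}$ is a central atom, so $[0,z_{i_k}]\cong[0,1]$), choose $g_k$ so that $g_k(\tfrac12 z_{i_k}) = \lambda_k z_{i_k}$ with $\lambda_k\to 0$. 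On all remaining summands set $g_i = \mathrm{id}$. By Corollary~\ref{c:arbitrary interval decomposition} and Lemma~\ref{l:order isom decomp} the product $f := \prod_{i\in I} g_i$ is a genuine order automorphism of $[0,e_M]$. Now $\tfrac12 e_M \sim (\tfrac12 z_i)_{i\in I}$ lies in $(0,e_M]$ (it has spectrum $\{\tfrac12\}$), but $f(\tfrac12 e_M) \sim (\dots,\lambda_k z_{i_k},\dots)$ has $0$ in its spectrum since $\inf_k \lambda_k = 0$, so $f(\tfrac12 e_M)\notin(0,e_M]$.

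\textbf{Main obstacle.} The delicate point is the existence, on a type I JBW-\emph{factor} of rank $\geq 2$, of an order automorphism of $[0,e]$ that moves $\tfrac12 e$ to a \emph{non}-invertible element — which would contradict Proposition~\ref{p:(0,e] invariant for factors}. So for the infinite-rank or higher-rank factors I cannot move $\tfrac12 z_{i_k}$ off the invertible part \emph{within that factor}; rather, the non-invertibility of $f(\tfrac12 e_M)$ must come purely from the $\inf_k\lambda_k=0$ phenomenon across infinitely many summands, with each $\lambda_k z_{i_k}$ still invertible in $M_{i_k}$. I therefore need at least one central atom among the $i_k$, or more carefully, I should take the $\lambda_k$ to be a sequence of \emph{positive} reals decreasing to $0$ and note that even though each coordinate $\lambda_k z_{i_k}$ is invertible in its factor, the supremum-style element $\bigvee_k \overline{\lambda_k z_{i_k}}$ (the image under $f$, identified via Remark~\ref{r:sup_in_internal_order_product}) has $0$ in its spectrum in $M$ because $\lambda_k z_{i_k}\to 0$. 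Verifying that the $g_k$ with the prescribed action on $\tfrac12 z_{i_k}$ exist is immediate for central atoms and, for higher-rank factors, follows by composing the transitivity-type maps supplied by Theorem~\ref{t:order isom on without central atoms} (the corollary after Proposition~\ref{p:order isom on interior} shows the order automorphism group acts transitively on $(0,e)$, so one can realise any prescribed invertible image of $\tfrac12 z_{i_k}$, and a limiting/$\varphi_t$ argument lets the images approach the boundary). Assembling these and checking the product map is a well-defined order isomorphism via Corollary~\ref{c:arbitrary interval decomposition} is then routine.
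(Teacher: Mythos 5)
Your proposal is correct and follows essentially the same route as the paper: the finite case is handled factorwise via Proposition~\ref{p:(0,e] invariant for factors} together with the observation that invertibility in a finite direct sum is coordinatewise, and the infinite case by an infinite product of coordinatewise order automorphisms that push $\tfrac12 e_M$ to the boundary across infinitely many central summands. The only (cosmetic) differences are that the paper realises your shrinking maps $g_k$ uniformly and elementarily with the maps $\varphi_{t_k}$ of Lemma~\ref{l:p functions} applied to each central summand — so no appeal to Theorem~\ref{t:order isom on without central atoms} or to transitivity is needed, avoiding your ``main obstacle'' discussion entirely — and that, since the target is $[0,e_N]$ rather than $[0,e_M]$, it composes the resulting bad automorphism of $[0,e_M]$ with an arbitrary order isomorphism onto $[0,e_N]$, a final one-line step you should add.
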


\begin{proof}
Suppose that all order isomorphisms $f\colon [0,e_M]\to [0,e_N]$ satisfy $f(0,e_M]=(0,e_N]$ and that $Z(M)$ is infinite dimensional, then $Z(M)$ must contain a countable set of pairwise orthogonal central projections $(p_n)_{n\ge 1}$, so we can decompose $M$ as a direct sum 
\[
M=\bigoplus_{n=1}^\infty U_{p_n}(M)\oplus M'. 
\]
The order interval $[0,e]$ can now be written as the product $[0,e]=\prod_{n=1}^\infty[0,p_n]\times [0,e_{M'}]$. For $n\in\N$ let $t_n:=\frac{1}{2}(3-2^n)$, then $t_n<1$ and by Lemma~\ref{l:p functions} we have order isomorphisms $\varphi_{t_n}\colon [0,p_n]\to [0,p_n]$ such that $\varphi_{t_n}(\frac{1}{2}p_n)=2^{-n}p_n$ for $n\in\N$. Together with the identity function $\mathrm{Id}_{[0,e_{M'}]}$ on $[0,e_{M'}]$ by Lemma~\ref{l:order isom decomp} we obtain an order isomorphism 
\[
g:=\prod_{n=1}^\infty\varphi_{t_n}\times\mathrm{Id}_{[0,e_{M'}]}\colon[0,e_M]\to [0,e_M]. 
\]
For any order isomorphism $f\colon[0,e_M]\to[0,e_N]$ it follows that $f\circ g\colon [0,e_M]\to[0,e_N]$ is an order isomorphism as well, but $(f\circ g)(\frac{1}{2}e_M)\notin(0,e_N]$ as $0\in\sigma(g(\frac{1}{2}e_M))$, so $g(\frac{1}{2}e_M)\notin (0,e_M]$ and $f([0,e_M]\setminus(0,e_M])=[0,e_N]\setminus(0,e_N]$. 

Conversely, suppose that $Z(M)$ is finite dimensional and let $f\colon [0,e_M]\to[0,e_N]$ be an order isomorphism. Then $M$ is a finite direct sum $M=M_1\oplus\cdots\oplus M_n$ of type $I$ factors which yields the decompositions $[0,e_M]=\prod_{k=1}^n[0,e_{M_k}]$ and $(0,e_M]=\prod_{k=1}^n(0,e_{M_k}]$ as partially ordered sets. By Lemma~\ref{l:order isom decomp} the image of the order isomorphism decomposes $[0,e_N]$ as the partially ordered set 
\[
[0,e_N]=\prod_{k=1}^n f[0,e_{M_k}]. 
\]
As $e_{M_k}$ is a central projection that cannot be decomposed further as a non-trivial sum of central projections, it follows from Corollary~\ref{c:arbitrary interval decomposition} that $f[0,e_{M_k}]=[0,e_{N_l}]$ where $e_{N_l}$ is a central projection which cannot be decomposed any further as a non-trivial sum of central projections either. Hence $U_{e_{N_l}}(N)$ is a factor and  $f(0,e_{M_k}]=(0,e_{N_l}]$ by Proposition~\ref{p:(0,e] invariant for factors}. We conclude that 
\[
f(0,e_M]=f\left(\prod_{k=1}^n(0,e_{M_k}]\right)=\prod_{k=1}^nf(0,e_{M_k}]=\prod_{l=1}^n(0,e_{N_l}]=(0,e_N]
\]
as required.
\end{proof}

\subsection{Main results}

For a general atomic JBW-algebra $M$, consider the collection $\mathcal{P}_1(M)\cap Z(M)$ of central atoms and we define 
\[
p:=\sup\{q\colon q\in \mathcal{P}_1(M)\cap Z(M)\} 
\]
in the lattice of projections. Since Jordan multiplication is separately $\sigma$-weakly continuous, it follows that $p$ operator commutes with all projections, which in turn implies that $p$ must be central by \cite[Lemma~4.2.5]{OS}. Hence the central projection $p$ decomposes $M$ as an algebraic direct sum $M=U_p(M)\oplus U_{p^\perp}(M)$ where $U_{p^\perp}(M)$ does not contain any central atoms. Following the terminology used in \cite{IR}, we call $U_p(M)$ the \emph{disengaged part} of $M$ and $U_{p^\perp}(M)$ the \emph{engaged part} of $M$. Note that the disengaged part of $M$ is precisely the associative part, i.e., the type $I_1$ part. Furthermore, we will denote by $M_D$ the disengaged part of $M$ and by $M_E$ the engaged part of $M$. In order to abbreviate the notation and to incorporate the atoms in $M_D$, we write $\mathcal{D}_M:=\mathcal{P}_1(M)\cap Z(M)$ and it follows from \cite[Proposition~3.5]{IR} that 
\[
M_D=\bigoplus_{p\in\mathcal{D}_M}U_p(M)=\bigoplus_{p\in\mathcal{D}_M}\mathbb{R}p=\ell_\infty(\mathcal{D}_M,\mathbb{R}),
\] 
as $U_p(M)$ is one-dimensional by \cite[Lemma~3.29]{AS}. Using the disengaged and engaged parts of $M$, we can decompose the effect algebra $[0,e_M]$ as
\begin{align}\label{e:disengaged/engaged parts of interval}
[0,e_M]=[0,e_{M_D}]\times[0,e_{M_E}]=[0,1]^{\mathcal{D}_M}\times[0,e_{M_E}]
\end{align}
and give the following description of the order isomorphisms between the effect algebras of atomic JBW-algebras. 

\begin{theorem}\label{t:char order isoms finite sums}
Let $M$ and $N$ be atomic JBW-algebras and let $f\colon [0,e_M]\to[0,e_N]$ be an order isomorphism. For $x \in M = M_D \oplus M_E$ we write $x=(x_p)_{p\in\mathcal{D}_M}\times x_E$ as in \eqref{e:disengaged/engaged parts of interval}.
\begin{enumerate}
\item There exists an index set $I$ such that $M_E$ decomposes as a direct sum of factors $\bigoplus_{i \in I} M_i$ and $N_E$ decomposes as a direct sum of factors $\bigoplus_{i \in I} N_i$, and for $x_E = (x_i)_{i \in I}$,
\[
f(x)=(f_p(x_p))_{\sigma(p)\in\mathcal{D}_N}\times \prod_{i \in I} \varphi_{t_i} \left( U_{(z_i^2+e_{N_i})^{1/2}}(e_{N_i}-(e_{N_i}+U_{z_i^{-1}}J_i x_i)^{-1})\right)
\]
for some $t_i < 1$, $z_i \in (N_i)_+^\circ$, a bijection $\sigma\colon \mathcal{D}_M\to\mathcal{D}_N$, order isomorphisms $f_p \colon [0,1] \to [0,1]$ for all $p \in\mathcal{D}_M$, and Jordan isomorphisms $J_i \colon M_i \to N_i$. 
\item If $M_E$ is a finite direct sum of factors, then
\[ 
f(x)=(f_p(x_p))_{\sigma(p)\in\mathcal{D}_N}\times \varphi_t\left(U_{(z^2+e_{N_E})^{1/2}}(e_{N_E}-(e_{N_E}+U_{z^{-1}}Jx_E)^{-1})\right)
\]
for some $t<1$, $z\in (N_E)_+^\circ$, a bijection $\sigma\colon \mathcal{D}_M\to\mathcal{D}_N$, order isomorphisms $f_p\colon [0,1]\to [0,1]$ for all $p\in\mathcal{D}_M$, and some Jordan isomorphism $J\colon M_E\to N_E$. 
\item If $M_E$ is a finite direct sum of factors and $M_D = \{0\}$, then
\[ 
f(x) = \varphi_t\left(U_{(z^2+e_N)^{1/2}}(e_N - (e_N + U_{z^{-1}}Jx)^{-1})\right)
\] 
for some $t<1$, $z\in N_+^\circ$, and some Jordan isomorphism $J\colon M \to N$. 
\end{enumerate}

\end{theorem}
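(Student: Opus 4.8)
The plan is to reduce the statement to the two structural results already established — Proposition~\ref{p:(0,e] invariant for factors} (invertible parts are preserved by order isomorphisms between effect algebras of type I factors) and Theorem~\ref{t:order isom on without central atoms} (the closed formula when there are no central atoms and the invertible part is preserved) — by decomposing $f$ along the canonical disengaged/engaged splitting and treating each summand separately.

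First I would fix a decomposition $M_E=\bigoplus_{i\in I}M_i$ of the engaged part into type I factors via \cite[Proposition~3.45]{AS}; since $M_E$ contains no central atoms, each $M_i$ has rank at least $2$. Together with $M_D=\bigoplus_{p\in\mathcal{D}_M}\mathbb{R}p$ this exhibits $[0,e_M]$ as the internal order product $\prod_{p\in\mathcal{D}_M}[0,p]\times\prod_{i\in I}[0,e_{M_i}]$, indexed by the orthogonal family $\mathcal{D}_M\cup\{e_{M_i}:i\in I\}$ of central projections with supremum $e_M$. Lemma~\ref{l:order isom decomp} then splits $f$ as $\prod_{p\in\mathcal{D}_M}f_p\times\prod_{i\in I}f_i$ with $f_p\colon[0,p]\to[0,f(p)]$ and $f_i\colon[0,e_{M_i}]\to[0,f(e_{M_i})]$ order isomorphisms, where $\{f(p):p\in\mathcal{D}_M\}\cup\{f(e_{M_i}):i\in I\}$ is again an orthogonal family of central projections of $N$ with supremum $e_N$.

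The heart of the argument — and the step I expect to be the main obstacle — is to check that this image decomposition is precisely the canonical disengaged/engaged decomposition of $[0,e_N]$. For $p\in\mathcal{D}_M$ the interval $[0,p]$ is totally ordered, hence so is $[0,f(p)]$; since $f(p)$ is central, Lemma~\ref{l:totally ordered} makes it a central atom of $N$, and I set $\sigma(p):=f(p)$. For $i\in I$, because $M_i$ is a factor, $e_{M_i}$ admits no non-trivial splitting as a sum of orthogonal central projections, hence neither does $f(e_{M_i})$ by Proposition~\ref{p:interval decomposition}; so $N_i:=U_{f(e_{M_i})}(N)$ is a factor, and it has rank at least $2$ (equivalently, $e_{N_i}$ is not a central atom) because $[0,e_{N_i}]\cong[0,e_{M_i}]$ is not totally ordered. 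As $N$ is the direct sum of the lines $\mathbb{R}\sigma(p)$ and the $N_i$, its only central atoms are the $\sigma(p)$, so $\sigma\colon\mathcal{D}_M\to\mathcal{D}_N$ is a bijection, $N_D=\bigoplus_{p}\mathbb{R}\sigma(p)$ and $N_E=\bigoplus_{i\in I}N_i$; in particular $f$ restricts to order isomorphisms $[0,e_{M_D}]\to[0,e_{N_D}]$ and $[0,e_{M_E}]\to[0,e_{N_E}]$.

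It then remains to describe each piece. Each $f_p$ is an order isomorphism between $[0,p]$ and $[0,\sigma(p)]$, both order isomorphic to $[0,1]$, which gives the maps $f_p\colon[0,1]\to[0,1]$. Each $f_i\colon[0,e_{M_i}]\to[0,e_{N_i}]$ is an order isomorphism between effect algebras of type I factors, so $f_i(0,e_{M_i}]=(0,e_{N_i}]$ by Proposition~\ref{p:(0,e] invariant for factors}, and since $M_i$ has no central atoms Theorem~\ref{t:order isom on without central atoms} supplies $t_i<1$, $z_i\in(N_i)_+^\circ$ and a Jordan isomorphism $J_i\colon M_i\to N_i$ realising $f_i$ in the stated form; reassembling $f=\prod_p f_p\times\prod_i f_i$ under the identification $\sigma$ gives $(i)$. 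For $(ii)$, if $M_E$ is a finite direct sum of factors then so is $N_E$, hence $Z(M_E)$ is finite dimensional, so the restriction $f_E:=\prod_{i\in I}f_i\colon[0,e_{M_E}]\to[0,e_{N_E}]$ satisfies $f_E(0,e_{M_E}]=(0,e_{N_E}]$ by Lemma~\ref{l:finite dimensional centre} (or directly, since each $f_i$ preserves invertibility and the sum is finite), and applying Theorem~\ref{t:order isom on without central atoms} to $f_E$ yields the single-factor-type formula with parameters $t<1$, $z\in(N_E)_+^\circ$ and $J\colon M_E\to N_E$; combined with the $f_p$ this is $(ii)$. Finally $(iii)$ is the special case $M_D=\{0\}$ of $(ii)$ — equivalently, $M$ then has no central atoms and finite-dimensional centre, so Lemma~\ref{l:finite dimensional centre} and Theorem~\ref{t:order isom on without central atoms} apply to $f$ directly.
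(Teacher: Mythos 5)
Your proposal is correct and follows essentially the same route as the paper: decompose $f$ along the disengaged/engaged splitting via Lemma~\ref{l:order isom decomp}, identify the image decomposition using Lemma~\ref{l:totally ordered} and Proposition~\ref{p:interval decomposition}, and then apply Proposition~\ref{p:(0,e] invariant for factors} (equivalently Lemma~\ref{l:finite dimensional centre}) together with Theorem~\ref{t:order isom on without central atoms} to each factor, respectively to $f_E$ in part $(ii)$. The only difference is organizational — you split into all summands at once rather than first separating $f_D$ from $f_E$ — and your more explicit verification that the image decomposition is the canonical one for $N$ is a welcome elaboration of a step the paper treats tersely.
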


\begin{proof}
$(i)$: By \eqref{e:disengaged/engaged parts of interval} we can write 
\[
[0,e_M]=[0,1]^{\mathcal{D}_M}\times[0,e_{M_E}]\qquad\mbox{and}\qquad [0,e_N]=[0,1]^{\mathcal{D}_N}\times[0,e_{N_E}].
\]
Since the order isomorphism $f$ must preserve the totally ordered parts of the decomposition, it follows that $f=f_D\times f_E$ where $f_D\colon [0,1]^{\mathcal{D}_M}\to[0,1]^{\mathcal{D}_N}$ and $f_E\colon [0,e_{M_E}]\to[0,e_{N_E}]$ are the corresponding order isomorphism restrictions of $f$ by Lemma~\ref{l:order isom decomp} and Lemma~\ref{l:totally ordered}. Moreover, $f_D$ induces a bijection $\sigma\colon \mathcal{D}_M\to \mathcal{D}_N$ so that $f_D((x_p)_{p\in\mathcal{D}_M})=(f_p(x_p))_{\sigma(p)\in\mathcal{D}_N}$ where $f_p\colon[0,1]\to[0,1]$ is an order isomorphism for each $p\in\mathcal{D}_M$. 

The atomic JBW-algebra $M_E$ is a direct sum $\bigoplus_{i\in I}M_i$ of type I factors by \cite[Proposition~3.45]{AS} and so the corresponding effect algebra is of the form $[0,e_{M_E}]=\prod_{i\in I}[0,e_{M_i}]$. Hence $[0,e_{N_E}]$ is of the form $\prod_{i\in I}[0,f(e_{M_i})]$ and $f_E=\prod_{i\in I} f_i$ where $f_i\colon[0,e_{M_i}]\to[0,f(e_{M_i})]$ are the restriction order isomorphisms by Lemma~\ref{l:order isom decomp}. It follows from Proposition~\ref{p:interval decomposition} that the effect algebra $[0,f(e_{M_i})]$ belongs to a factor $N_i$ in $N_E$ for each $i\in I$ and $N_E=\bigoplus_{i\in I}N_i$. 

Since $M_i$ does not contain any central atoms and $f_i((0,e_{M_i}])=(0,e_{N_i}]$ by Lemma~\ref{l:finite dimensional centre}, it follows from Theorem~\ref{t:order isom on without central atoms} that there is a $t_i<1$, an element $z_i \in (N_i)_+^\circ$, and a Jordan isomorphism $J_i \colon M_i\to N_i$ such that 
\[
f_i(x_i) = \varphi_{t_i} \left( U_{(z_i^2+e_{N_i})^{1/2}}(e_{N_i}-(e_{N_i}+U_{z_i^{-1}}J_i x_i)^{-1})\right)
\]
which yields the required description of the order isomorphism $f$.

$(ii)$: The proof is similar to $(i)$, except now $f_E(0, e_M] = (0, e_N]$ by Lemma~\ref{l:finite dimensional centre} and so we can use the same arguments for $f_E$ that characterise $f_i$.

$(iii)$: Follows immediately from $(ii)$.
\end{proof}
Note that the characterisation of order isomorphisms in Theorem~\ref{t:char order isoms finite sums} and the order isomorphisms on the unit interval $[0,1]$ determine all order isomorphisms between effect algebras of atomic JBW-algebras. 

\begin{corollary}
Let $M$ and $N$ be atomic JBW-algebras. Then $[0,e_M]$ and $[0,e_N]$ are order isomorphic if and only if $M$ and $N$ are Jordan isomorphic. 
\end{corollary}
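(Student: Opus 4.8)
The plan is to prove both implications, with the substantive direction being that an order isomorphism $[0,e_M]\cong[0,e_N]$ forces a Jordan isomorphism $M\cong N$; the converse is routine.

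For the easy direction, suppose $\Phi\colon M\to N$ is a Jordan isomorphism. Then $\Phi$ is unital (it sends the unique unit to the unique unit), positive, and order preserving with order-preserving inverse, so it restricts to an order isomorphism $[0,e_M]\to[0,e_N]$. This requires only the observation that a Jordan isomorphism preserves the positive cone (as it preserves squares) and the unit.

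For the hard direction, I would invoke Theorem~\ref{t:char order isoms finite sums}(i) to extract from a given order isomorphism $f\colon[0,e_M]\to[0,e_N]$ all the structural data it produces: a bijection $\sigma\colon\mathcal{D}_M\to\mathcal{D}_N$ between the central atoms, index sets realising $M_E=\bigoplus_{i\in I}M_i$ and $N_E=\bigoplus_{i\in I}N_i$ as direct sums of type I factors indexed by the \emph{same} set $I$, and Jordan isomorphisms $J_i\colon M_i\to N_i$ for each $i\in I$. From this I would assemble a Jordan isomorphism $M\to N$ piecewise: on the disengaged parts, $M_D=\ell_\infty(\mathcal{D}_M,\mathbb{R})\to\ell_\infty(\mathcal{D}_N,\mathbb{R})=N_D$ is induced by the bijection $\sigma$ (this is a Jordan isomorphism since $\ell_\infty$ with pointwise multiplication is associative and a bijection of index sets is an algebra isomorphism); on the engaged parts, $\bigoplus_{i\in I}J_i\colon\bigoplus_{i\in I}M_i\to\bigoplus_{i\in I}N_i$ is a Jordan isomorphism because each $J_i$ is and direct sums of Jordan isomorphisms are Jordan isomorphisms; then combine these on $M=M_D\oplus M_E$. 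The main point to check is that these pieces really do fit together into an isomorphism of the whole algebra, i.e. that the decomposition $M=M_D\oplus M_E$ (and likewise for $N$) is an algebra direct sum, which it is by construction of $M_D$ via the central projection $p=\sup\{q:q\in\mathcal{D}_M\}$.

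The step I expect to require the most care is not any single calculation but rather making sure I am entitled to use Theorem~\ref{t:char order isoms finite sums}(i) in full generality: that theorem already does the heavy lifting, producing the index set $I$, the factor decompositions, the bijection $\sigma$, and the Jordan isomorphisms $J_i$, so the corollary is essentially a matter of reading off that these data glue to a global Jordan isomorphism. The only genuine obstacle is the bookkeeping that $\dim Z(M)=\dim Z(N)$ as cardinals (both equal $|\mathcal{D}_M|+|I|$, matching via $\sigma$ and the common index set $I$) and that no information is lost when passing from the effect-algebra level back to the algebra level — but this is transparent given Theorem~\ref{t:char order isoms finite sums}. Once that is in place, the existence of the Jordan isomorphism is immediate, completing the equivalence.
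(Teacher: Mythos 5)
Your proposal is correct and follows essentially the same route as the paper: both directions are handled identically, with the hard direction reading off the bijection $\sigma$ on central atoms, the common index set $I$ of type I factors, and the Jordan isomorphisms $J_i$ from Theorem~\ref{t:char order isoms finite sums}$(i)$, and then gluing $J_D\oplus\bigoplus_{i\in I}J_i$ along the algebra direct sum $M=M_D\oplus M_E$. The paper's proof is just this, so no further comment is needed.
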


\begin{proof}
If $[0,e_M]$ and $[0,e_N]$ are order isomorphic, then by decomposing $[0,e_M]=[0,e_{M_D}]\times[0,e_{M_E}]$ and $[0,e_N]=[0,e_{N_D}]\times[0,e_{N_E}]$, an order isomorphism $f\colon [0,e_M]\to [0,e_N]$ can be written as $f=f_D\times f_E$ where $f_D\colon [0,e_{M_D}]\to[0,e_{N_D}]$ and $f_E\colon [0,e_{M_E}]\to [0,e_{N_E}]$ are the corresponding order isomorphism restrictions of $f$ by Lemma~\ref{l:order isom decomp} and Lemma~\ref{l:totally ordered}. Moreover, $f_D$ induces a bijection $\sigma\colon\mathcal{D}_M\to\mathcal{D}_N$ which can be used to define a Jordan isomorphism $J_D\colon M_D\to N_D$ via 
\[
J_D((x_p)_{p\in\mathcal{D}_M}):=(x_p)_{\sigma(p)\in\mathcal{D}_N}.
\]
By Theorem~\ref{t:char order isoms finite sums}$(i)$ there is a Jordan isomorphism $J_i\colon M_i\to N_i$ for each $i\in I$ and we can define a Jordan isomorphism $J_E:=\bigoplus_{i\in I}J_i\colon M_E\to N_E$ by $J_E((x_i)_{i\in I}):=(J_i(x_i))_{i\in I}$. We conclude that $J:=J_D\oplus J_E\colon M\to N$ is a Jordan isomorphism and so $M$ and $N$ are Jordan isomorphic. 

Conversely, every Jordan isomorphism $J \colon M \to N$ restricts to an order isomorphism between $[0,e_M]$ and $[0, e_N]$.
\end{proof}

\footnotesize
\bibliographystyle{amsalpha}
\bibliography{effect-algebra-bib}

\end{document}